\newtheorem{theorem}{Theorem}[section]
\newtheorem{lemma}[theorem]{Lemma}
\newtheorem{defi}[theorem]{Definition}
\newcommand{\mP}{\mathcal{P}}
\newcommand{\mW}{\mathcal{W}}
\newcommand{\R}{\mathbb R}
\begin{document}
\vspace{-2cm}
\title{Sampling for the V-line Transform with Vertex in a Circle}
\author{Duy N. Nguyen\footnote{High School for the Gifted, Ho Chi Minh City, Vietnam. Email: nnduy@ptnk.edu.vn.} ~ and Linh V. Nguyen\footnote{University of Idaho, 875 Perimeter Dr, Moscow, ID 83844, USA. Email: lnguyen@uidaho.edu.}}

\maketitle
\begin{abstract}
In this paper, we consider a special V-line transform. It integrates a given function $f$ over the V-lines whose centers are on a circle centered at the origin and the symmetric axes pass through the origin. We derive two sampling scheme of the transform: the standard and interlaced ones. We prove the an error estimate for the schemes, which is explicitly expressed in term of $f$. 
\end{abstract} 
\section{Introduction}
The V-line transform arises in Compton camera imaging. Let us first recall the classical setup single-photon emission computed tomography SPECT, which is a nuclear medicine tomographic imaging technique using gamma rays.  In SPECT, weakly radioactive tracers are given to the patient. The radioactive tracers can be detected through the emission of gamma ray photons revealing the  information about biochemical processes. Then one uses a gamma camera to record photons that enter the detector surface perpendicularly. That is, the camera measures the integrals of the tracer distribution over straight lines that are orthogonal to its surface, see Fig~\ref{F:gamma}.
\begin{center}
	\begin{figure}[h]
		\begin{center}
\includegraphics[scale=0.5]{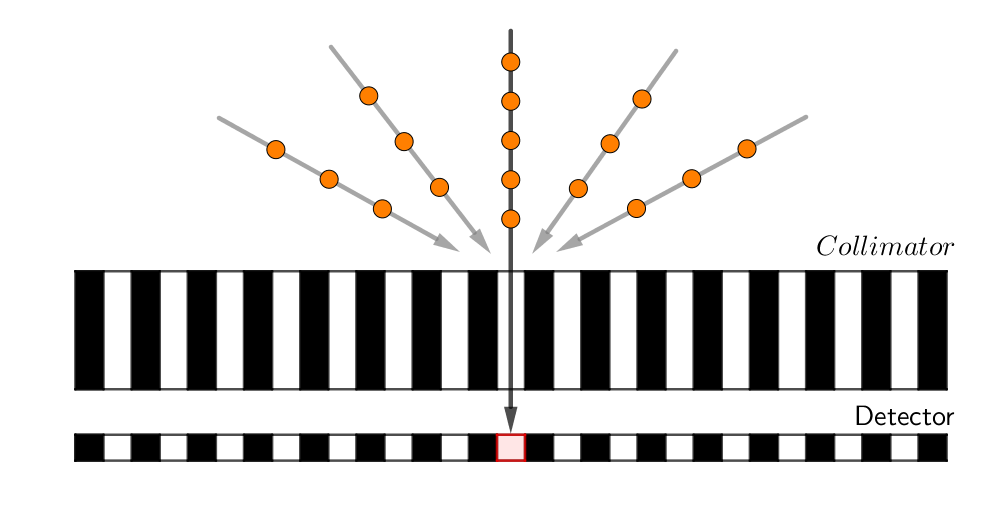}
		\end{center}
\caption{Gamma camera for SPECT. Only orthogonal photons measured in detector, other is removed.}
\label{F:gamma}
	\end{figure}
\end{center}
	This technique removes most photons and only a few photons are recorded. Therefore, a new type of camera for SPECT, which makes use of Compton scattering, was proposed by Everett \cite{everett1977gamma} and Singh \cite{singh1983electronically}. It uses electronic collimation as an alternative to mechanical collimation, which provides both high efficiency and multiple projections of the object. The camera consists of two plane gamma detectors positioned one behind the other. An emitted photon undergoes Compton scattering in the first detector surface $D_1$ and is absorbed by the second detector surface $D_2$. In each detector surface, the position and the energy of the photon are measured. The scattering angle at $D_1$ is determined via the Compton scattering formula $\cos\psi = 1-\dfrac{mc^2(E_1 – E_2)}{E_1 E_2}$, where $m$ is the electron mass, $c$ the speed of light, $E_1$ the photon energy at $D_1$, and $E_2$  the energy of the photon measured at $D_2$. So a photon observed at $x_1 \in D_1$ and $x_2 \in D_2$ with the energy $E_1,E_2$ respectively must have been emitted on the surface of the circular cone, whose vertex is at $x_1$,  central axis points from $x_2$ to $x_1$, and the half-opening angle is given by $\psi$ (see Fig.~\ref{F:Compton}).\\
\begin{center}
	\begin{figure}[h]
		\begin{center}
\includegraphics[scale=0.5]{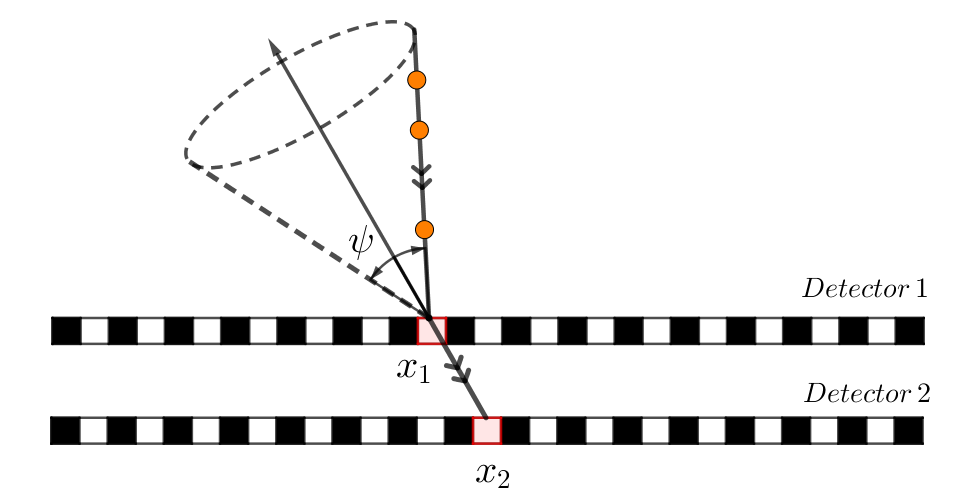}
		\end{center}
\caption{Compton camera}\label{F:Compton}
	\end{figure}
\end{center}
As a result, a Compton camera gives us the integrals of emission distribution on conical surfaces whose vertices are on the $D_1$. The mathematical problem of {\bf Compton camera imaging} is to reconstruct the emission distribution from such integrals.   

There are quite a few works on the mathematical problems of Compton camera. Specially in the two dimensional space, the cone become V-line and there 
exist some inversion formulas (e.g. \cite{basko1997analytical,morvidone2010v,truong2011new}).  In the three dimensional space, the space of conical surfaces whose vertices are on a detector surface is a five dimensional manifold. One is in the situation of redundant data. Taking advantage of such redundancy was the topic of several works (see, e.g.,\cite{terzioglu2015some}). One, however, may wish to restrict themselves into the lower dimensional data. In this article, we are interested in the sampling theory for Compton camera imaging in two dimensional space (i.e., the $V$-line transform). 

%Gree and Bones \cite{cree1994towards} were probably the first ones to give analytical reconstruction formula from 3-D Compton camera data, where the cones have vertical central axes. Other analytic reconstructions from the cone transform with restricted vertices were obtained in \cite{moon2017analytic,nguyen2005radon,allmaras2010detecting}. In general $n$ dimensional space, the paper \cite{haltmeier2014exact,terzioglu2015some} gave the inversion formular for conical transform. Other paper \cite{jung2015inversion,schiefeneder2017radon}  using spherical harmonics to expansion solutions.

\begin{center}
 \begin{figure}[h]
	\begin{center}
	 \includegraphics[scale=0.3]{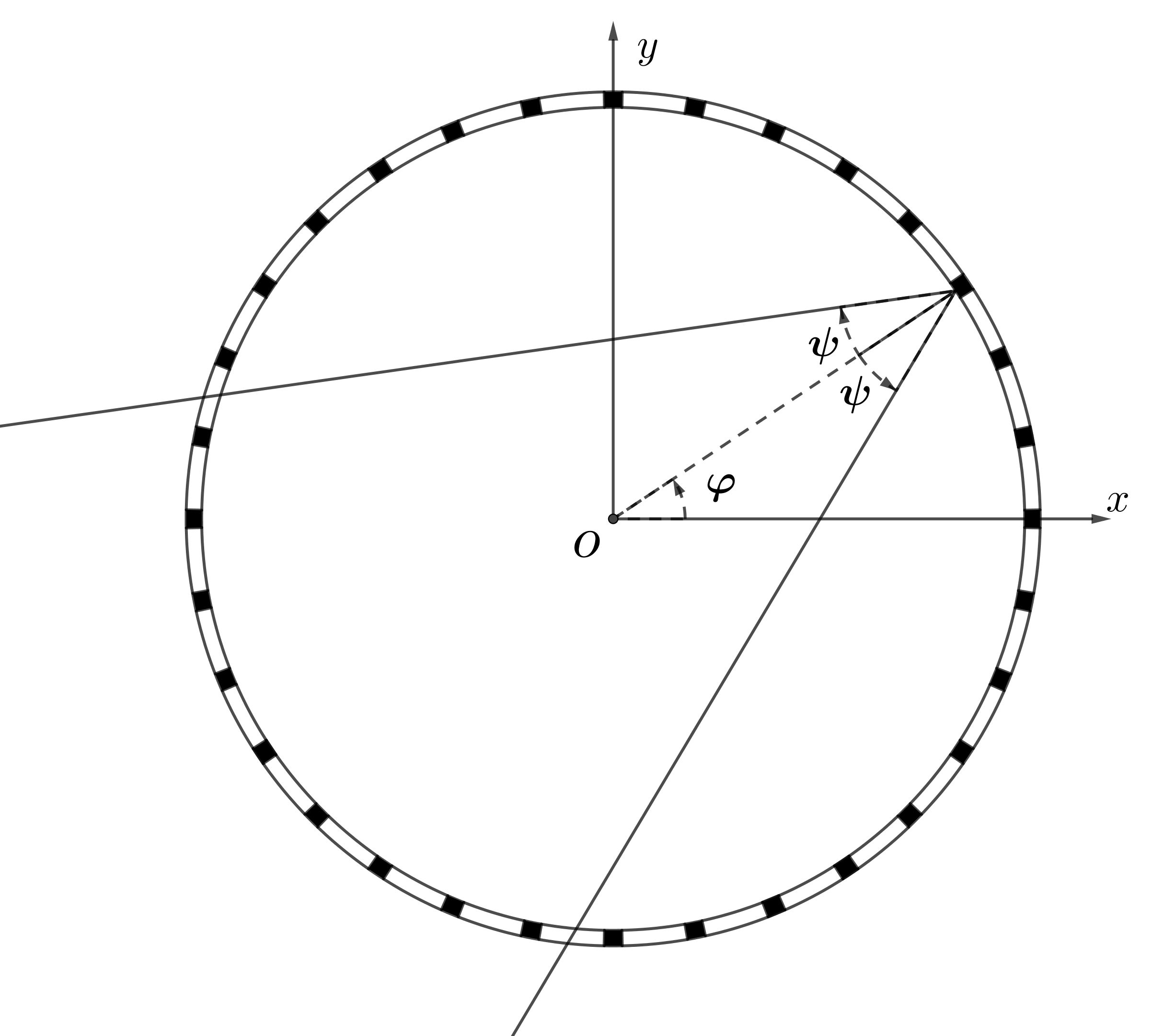}
	\end{center}	 
	\caption{Our setups: the V-line transform over all V-lines whose vertices are on the circle of radius $r$ centered at the origin and whose symmetric axis pass through the origin.}\label{V:Ours} 
 \end{figure}
\end{center}

Namely, let $f$ be a $b$-essentially band-limited function supported in the circle of radius $r_0 \leq 1$ centered at the origin. We consider the V-line transform $V(f)$ of $f$ on all the V-lines whose vertex is on the circle of radius $ r > 1$ centered at the origin and the symmetric axis passing through the origin, see Fig.~\ref{V:Ours}. 

\begin{defi} \label{V:defineVline}
	Let $f$ be a compactly supported function in $D_1(0)$. The V-line transform $Vf\left(\varphi,\psi \right)$ of $f$ is defined by
	\begin{eqnarray*}
	Vf:  \left[ 0 , 2\pi  \right)\times \left( 0;\pi \right) &\longrightarrow&   \mathbb{R},  \\
	 \left( \varphi , \psi  \right) &\longmapsto& \sum\limits_{\sigma =\pm 1}{\int\limits_{0}^{+\infty}{f\left( r \theta \left( \varphi  \right)- t \theta \left( \varphi +\sigma \psi  \right) \right)dt}}. 	
	\end{eqnarray*}
\end{defi}

We consider the problem of recovering function $Vf$ from its discrete measures $Vf\left(\varphi_k, \alpha_m \right)$. That is, we consider efficient sampling schemes to recover the V-line transform. Using Shannon sampling series and classical Fourier analysis, several authors \cite{natterer2001mathematics,natterer1993sampling,faridani2006fan,haltmeier2016sampling} study the reconstruction of the standard Radon and circular Radon transforms from its discrete measurement. More recently, Stefanov \cite{Stefanov} employed semi-classical analysis to study the same problem for generalized Radon transform. 

In this work, we introduce two sampling schemes. The first one is the standard sampling scheme, which is $(\varphi,\psi) = (\varphi_k,\psi_m)$, $0 \leq k \leq N_\varphi -1 , 0 \leq m \leq N_\psi-1$. Here, $\varphi_k$ and $\psi_m$ are evenly spaced in their domains. By restricting on the cones that intersect with the support of $f$, we only consider 
	\begin{align*}
	\varphi_k = &\dfrac{k2\pi}{N_{\varphi}}, \hspace{1cm} \text{for} \hspace{1cm} 0\leq k \leq N_{\varphi} - 1,\\
	\psi_m = &\dfrac{m2\pi}{N_{\psi}}, \hspace{1cm} \text{for} \hspace{1cm} 0 \leq m < \dfrac{N_{\psi}\arcsin(r_0 / r)}{2\pi}.
		\end{align*}
To well recover $g$ from the discrete data we need
	\begin{align*}
	N_{\varphi}\geq 2\pi rb, \quad N_{\psi} \geq 4rb.
	\end{align*}
The total number $M_0$ of needed samples has to satisfy 
	$$M_0 \geq 4(rb)^2\arcsin(r_0/r).$$
The second, more efficient, sampling scheme is given by 
		\begin{align*}
	\varphi_k = &\dfrac{k2\pi}{N_{\varphi}} \hspace{1cm} \text{for} \hspace{1cm} 0\leq k \leq N_{\varphi}-1, \\
	\psi_{k,m} = &\dfrac{\pi m}{N_{\psi}} \hspace{1cm} \text{for k, m are parity and} \hspace{1cm} 0 \leq m < \dfrac{N_{\psi}\arcsin(r_0 / r)}{\pi},	
		\end{align*}
	with the sampling conditions
	\begin{align*}
N_{\varphi}\geq 2\pi rb,\quad N_{\psi} \geq 3rb.
	\end{align*}	
The total number of samples satisfies 
		$$M_0 \geq 3(rb)^2\arcsin(r_0/r),$$ which is three quarters of the standard scheme. 
		
The article is organized as follows. In Section~\ref{S:pre}, we introduce some preliminaries. 

%we solve an inversion formula for this V-line transform. problem of reconstructing a function $f$ from $Vf$ values. This inversion is based on the relation between the Fourier coefficient of V-line transform and Radon transform. We also use a formula in  \cite{cormack1963representation}  when author recovered a function from its Radon transform.

\section{Preliminaries} \label{S:pre}

{\bf The 2D Radon transform.} We recall the 2D Radon transform that maps a function $f$ on $\mathbb{R}^2$ into the set of its integrals over the lines on $\mathbb{R}^2$. If $\theta(\varphi)=\left(\cos\varphi,\sin\varphi\right)\in S^{1}$ is a unit vector and $s$ is a real number. The Radon transform $Rf(\varphi, s)$ is defined as
	$$Rf(\varphi, s)=\int_{x\cdot\theta=s}f(x) \, dl = \int^{+\infty}_{-\infty}{f\left(s\cos \varphi-t\sin \varphi; s\sin\varphi + t\cos\varphi \right)dt}.$$
In this notation, $Rf(\varphi,s)$ is the integral of $f$ over the line with a normal direction $\theta$ and of the distance $s$ from the origin. Let $R_{\varphi}f(\cdot)=Rf\left(\varphi, \cdot \right)$, a useful  relation between the 1D Fourier transform of $R_{\varphi}f$ and the 2D Fourier transform of $f$ is as follows
	$$\left( R_{\varphi} f \right)^{\wedge}(\sigma) = (2\pi)^{1/2}	
\hat{f}\left( \sigma \cdot \theta \right).$$
Here, the Fourier transform of a function $g$ defined on $\mathbb{R}^n$ is given by 
$$\hat{g}(\xi) = \dfrac{1}{(2\pi)^{n/2}}\int_{\mathbb{R}^n}{g(x)e^{-ix\cdot\xi}dx}.$$
The inversion formula of 2D Radon transform is
$$f(x,y)=\dfrac{-1}{4\pi^2}\int\limits_{0}^{\pi}{\int\limits_{-\infty}^{+\infty}{\dfrac{\frac{\partial}{\partial s}Rf(\varphi,s)}{s-x\cos\varphi-y\sin\varphi}ds}d\varphi}.$$
	One can see the following relationship between V-line transform and Radon transform
	$$Vf\left( \varphi ,\psi  \right)=Rf\left( \varphi +\psi -\frac{\pi }{2},r\sin \psi  \right)+Rf\left( \varphi -\psi -\frac{\pi }{2},-r\sin \psi  \right).$$
{\bf Sampling of periodic functions.}
Let $g$ be a function in $\mathbb{R}^n$ that is periodic with respect to $n$ vectors $p_1, p_2, ...,p_n $. If the matrix $P=\left( p_1, p_2, \cdots, p_n \right)$ is nonsingular then $g$ is called a P-periodic function. One denotes $L_P = P\mathbb{Z}^n$  and its reciprocal lattice $L^{\perp}_{P} = L_{2\pi P^{-T}}$.	 We define the discrete Fourier transform of $g$ be the following function on  $L^{\perp}_{P}$: 
	$$\hat{g}(\xi)=\left| \det (P) \right|^{-1}\int\limits_{P[0,1)^n}{g(x)e^{-ix\xi}dx}, \hspace{1cm} \xi \in L^{\perp}_{P}.$$
Let us note that $g$ can be recovered from its Fourier coefficients by the series \cite{terras2012harmonic}
	$$g(x)=\sum\limits_{\xi\in L^{\perp}_{P}}{\hat{g}(\xi)e^{ix\xi}}.$$
Let W be a real nonsingular $n \times n$ - matrix, our goal is study the sampling of $g$ on $L_{W}$. For this to make,  we assume $L_{P}\subset L_{W}$. This case is implied  if and only if $P=WM$ with an integer matrix $M$. Hence, $L^{\perp}_{W} \subset L^{\perp}_{P}$. Our study relies heavily on Poisson summation formula for a P-periodic function $g$, see \cite{natterer1993sampling}. It reads
	$$\sum\limits_{y \in  L_{W}/L_{P}}{g(x+y)}=\left| \dfrac{det(P)}{det(W)} \right|\sum\limits_{\xi \in L_{W}^\perp}{\hat{g}(\xi)e^{ix\xi}}.$$
Here, $L_W/L_P$ is the quotient space whose elements are of the form $y + L_P$. 	In particular, using the Poisson summation formula one obtains (see  \cite{natterer2001mathematics,faridani1991reconstructing})
	\begin{theorem} \label{D:Samplingn}
Suppose $g \in C^{\infty}\left( \mathbb{R}^n \right)$ is a P - periodic function. Let $K \subset L_{P}^{\perp} $ be a finite set such that its translates $ K+\eta$, $\eta \in L_{W}^{\perp}$ are disjoint, and $\chi_K$ denotes the characteristic function of K, i.e, $\chi_{K}(\xi)=1$ if $\xi \in K$ and $\chi_{K}(\xi)=0$ otherwise.  We define the sampling series
	$$ S_{W,K}g \left( x\right):= \left|\dfrac{ detW}{detP}\right|\sum\limits_{v \in L_W/L_P}{\widetilde{\chi_K} \left( x - v\right)g(v)}.$$
Then, 
	$$\left\| {S_{W,K}g - g} \right\|_{L^{\infty}}\leq 2 \int\limits_{L_{P}^{\perp} \backslash  K}{\left|\widehat{g}(\xi) \right|d\xi}.$$
	\end{theorem}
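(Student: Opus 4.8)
The plan is to compute the sampling series $S_{W,K}g$ explicitly in terms of the Fourier coefficients $\hat g(\xi)$, $\xi\in L_P^\perp$, and then compare it term by term with the Fourier series $g(x)=\sum_{\xi\in L_P^\perp}\hat g(\xi)e^{ix\xi}$. Recall that $\widetilde{\chi_K}$ is the discrete Fourier inversion of $\chi_K$, i.e. $\widetilde{\chi_K}(x)=\sum_{\xi\in K}e^{ix\xi}$. Substituting this into the definition of $S_{W,K}g$ and interchanging the finite inner sum with the outer sum over the coset representatives, I would first write
$$S_{W,K}g(x)=\left|\frac{\det W}{\det P}\right|\sum_{\xi\in K}e^{ix\xi}\sum_{v\in L_W/L_P}g(v)\,e^{-iv\xi}.$$
The whole argument then reduces to evaluating the inner sum $\sum_{v\in L_W/L_P}g(v)e^{-iv\xi}$ for a fixed $\xi\in K\subset L_P^\perp$.

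For this inner sum I would insert the Fourier series $g(v)=\sum_{\eta\in L_P^\perp}\hat g(\eta)e^{iv\eta}$ and swap the order of summation, which leaves the character sum $\sum_{v\in L_W/L_P}e^{iv(\eta-\xi)}$ over the finite quotient group $L_W/L_P$. Since $\eta-\xi\in L_P^\perp$ and $L_W^\perp\subset L_P^\perp$, the exponential is a well-defined character of $L_W/L_P$, and by the orthogonality relations for these characters (equivalently, by the Poisson summation formula recalled above) the sum equals $|\det P/\det W|$ when $\eta-\xi\in L_W^\perp$ and vanishes otherwise. Hence only the frequencies $\eta$ in the coset $\xi+L_W^\perp$ survive, and after the determinant factors cancel I would arrive at the clean formula
$$S_{W,K}g(x)=\sum_{\xi\in K}\;\sum_{\mu\in L_W^\perp}\hat g(\xi+\mu)\,e^{ix\xi}.$$

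To finish, I would split off the $\mu=0$ term, which contributes exactly $\sum_{\xi\in K}\hat g(\xi)e^{ix\xi}$, i.e. the truncation of the Fourier series of $g$ to $K$. Subtracting $g(x)=\sum_{\xi\in K}\hat g(\xi)e^{ix\xi}+\sum_{\zeta\in L_P^\perp\setminus K}\hat g(\zeta)e^{ix\zeta}$ gives
$$S_{W,K}g(x)-g(x)=\sum_{\xi\in K}\sum_{\mu\in L_W^\perp\setminus\{0\}}\hat g(\xi+\mu)\,e^{ix\xi}\;-\;\sum_{\zeta\in L_P^\perp\setminus K}\hat g(\zeta)\,e^{ix\zeta}.$$
This is where the disjointness hypothesis does the essential work: because the translates $K+\eta$, $\eta\in L_W^\perp$, are pairwise disjoint, the map $(\xi,\mu)\mapsto\xi+\mu$ is injective on $K\times L_W^\perp$, and for $\mu\neq 0$ its value lies outside $K$ (otherwise $\xi+\mu$ would belong to both $K$ and $K+\mu$). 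Thus the aliasing frequencies $\{\xi+\mu:\xi\in K,\ \mu\neq 0\}$ form a set of distinct points of $L_P^\perp\setminus K$. Bounding each of the two sums by the triangle inequality, using $|e^{ix\xi}|=1$, each is dominated by $\sum_{\zeta\in L_P^\perp\setminus K}|\hat g(\zeta)|$, and adding the two contributions produces the factor $2$ in the claimed estimate (reading $\int_{L_P^\perp\setminus K}$ as the corresponding sum over the lattice points).

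The main obstacle is purely the bookkeeping in this last step: one must verify that the disjointness assumption guarantees the aliasing terms are counted without overlap and all lie in $L_P^\perp\setminus K$, so that the single sum $\sum_{\zeta\in L_P^\perp\setminus K}|\hat g(\zeta)|$ dominates both groups of leftover terms. The preceding computation is the standard orthogonality/Poisson reduction, and the $C^\infty$ (hence rapid decay of $\hat g$) hypothesis is what makes all the rearrangements of sums absolutely convergent and therefore legitimate.
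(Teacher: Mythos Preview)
Your proof is correct and is precisely the standard argument via the Poisson summation formula that the paper itself invokes. Note, however, that the paper does not actually supply a proof of this theorem: it merely states that the result follows from the Poisson summation formula and refers the reader to \cite{natterer2001mathematics,faridani1991reconstructing}. Your write-up is a faithful expansion of that reference, deriving the aliasing identity $S_{W,K}g(x)=\sum_{\xi\in K}\sum_{\mu\in L_W^\perp}\hat g(\xi+\mu)e^{ix\xi}$ and then using the disjointness of the translates $K+\eta$ to bound both the aliasing sum and the truncation error by $\sum_{\zeta\in L_P^\perp\setminus K}|\hat g(\zeta)|$, which is exactly how the cited sources proceed.
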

{\bf Sampling of the V-Line transform.} Because of $supp(f)\subset D_1(0)$, so that $Vf(\varphi, \psi)=0$ with $\psi \in \left[\dfrac{\pi}{2}, \pi\right)$. Consequently, $Vf$ can expand to an even function in $\psi$ variable and $2\pi$-periodic in both variables. We will make use of the two dimension form of Theorem \ref{D:Samplingn} to recover the V-line transform.  For $f \in {{C}^{\infty }}\left( {{D}_{1}}\left( 0 \right) \right)$, we denote $g\left( \varphi ,\psi  \right) = Vf\left( \varphi ,\psi  \right)$. Since, $g$ is $2\pi$-periodic in each variables, then the periodic matrix of $g$ is $P = 2\pi I_{2\times 2}$ and $L_{\mP}^{\perp}=\mathbb{Z}^2$. We chose matrix $W$ which satisfies the condition $L_{\mP} \subset L_{\mW}$ as  
	\[
	W = 2\pi\left(
	\begin{array}{cc}
	1/P & 0 \\
	N/(PQ) & 1/Q
	\end{array}
	\right)
	\]
where $P, Q, N$ are three integers such that $P, Q >0$ and $0 \leq N < P$. Therefor, 
 $$L_{W}/L_{P}= \{(s_j, t_{jl}) : s_j=\frac{j2\pi}{P},  t_{jl}=\frac{(l+N j/P)2\pi}{Q}, j=0,...,P-1, l=0,..., Q-1\}.$$
The sampling theorem for this setting follows as
	\begin{theorem} \label{D:Sampling}
Suppose $f \in C^{\infty}_{0}\left( D_1(0) \right)$ and $g\left( \varphi ,\psi  \right) = Vf\left( \varphi ,\psi  \right)$. Let $K \subset \mathbb{Z}^2 $ be a finite set such that its translates $ K+\eta$, $\eta \in L_{W}^{\perp}$ are disjoint. For $x \in [0,2\pi)\times[0,2\pi)$, the sampling series is
	$$ S_{W,K}g \left( x\right):= \dfrac{ 1}{PQ}\sum\limits_{v \in L_{W}/L_{P}}{\widetilde{\chi_K} \left( x - v\right)g(v)}.$$
Then, 
	$$\left\| {S_{W,K}g - g} \right\|_{L^{\infty}}\leq 2 \sum_{\mathbb{Z}^2 \backslash  K}{\left|\widehat{g}(\xi) \right|d\xi}.$$
\end{theorem}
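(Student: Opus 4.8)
The plan is to derive Theorem~\ref{D:Sampling} as the two-dimensional instance of the general sampling estimate Theorem~\ref{D:Samplingn}, applied to $g=Vf$ with period matrix $2\pi I_{2\times 2}$ and the displacement matrix $W$ fixed above. Two things then have to be supplied: first, that $g$ actually satisfies the standing hypotheses of Theorem~\ref{D:Samplingn}, i.e.\ that it extends to a function in $C^\infty(\mathbb{R}^2)$ that is periodic with respect to $2\pi I_{2\times 2}$; and second, that the normalizing constant and the lattices $L_P^\perp$ and $L_W/L_P$ specialize to the concrete objects named in the statement. I expect the first item to be the only one carrying genuine content; the second is a determinant and lattice computation.

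For the smoothness and periodicity, I would argue directly from Definition~\ref{V:defineVline}. Since $f\in C_0^\infty(D_1(0))$, for each $(\varphi,\psi)$ the integrand $t\mapsto f(r\theta(\varphi)-t\,\theta(\varphi+\sigma\psi))$ is supported in a compact $t$-interval depending smoothly on the parameters, so $Vf$ is smooth on $[0,2\pi)\times(0,\pi)$. The defining formula is invariant under $\psi\mapsto-\psi$, since this merely interchanges the summands $\sigma=+1$ and $\sigma=-1$; hence $Vf$ extends to an \emph{even} smooth function of $\psi$, and in particular all odd $\psi$-derivatives vanish at $\psi=0$, making the even extension $C^\infty$ across $\psi=0$. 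Because $\theta$ is $2\pi$-periodic, $Vf$ is $2\pi$-periodic in each variable. The remaining point is the gluing near $\psi=\pi$: for $\psi\in[\pi/2,\pi)$ one has $\cos\psi\le 0$, so the squared distance $r^2-2rt\cos\psi+t^2$ of the point $r\theta(\varphi)-t\,\theta(\varphi+\sigma\psi)$ from the origin is nondecreasing in $t\ge 0$ and stays $\ge r^2>1$; the half-line therefore misses $\supp f$ and $Vf\equiv 0$ there. Since $Vf$ vanishes identically in a neighborhood of $\psi=\pi$, the even $2\pi$-periodic extension patches together as a $C^\infty$ function on all of $\mathbb{R}^2$, giving $g\in C^\infty(\mathbb{R}^2)$ periodic with respect to $2\pi I_{2\times 2}$.

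For the second item I would simply compute. With period matrix $2\pi I_{2\times 2}$ one has $\det(2\pi I_{2\times 2})=4\pi^2$ and $L_P^\perp=L_{2\pi(2\pi I_{2\times 2})^{-T}}=\mathbb{Z}^2$, so the error set $L_P^\perp\setminus K$ becomes $\mathbb{Z}^2\setminus K$ and the tail term reduces to the Fourier series $\sum_{\mathbb{Z}^2\setminus K}|\widehat{g}(\xi)|$, which converges by the smoothness just established. For the chosen $W$ a direct evaluation gives $\det W=(2\pi)^2\cdot\tfrac1P\cdot\tfrac1Q=\tfrac{4\pi^2}{PQ}$, so the prefactor $|\det W|/|\det(2\pi I_{2\times 2})|$ in Theorem~\ref{D:Samplingn} equals $1/(PQ)$, exactly the constant in the sampling series. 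One also checks $2\pi I_{2\times 2}=W M$ with the integer matrix $M=\begin{pmatrix}P&0\\-N&Q\end{pmatrix}$, which confirms $L_P\subset L_W$ and shows that the $PQ$ cosets $W(j,l)^{T}+L_P$, $0\le j\le P-1$, $0\le l\le Q-1$, are precisely the sampling nodes $(s_j,t_{jl})$ listed before the theorem. Feeding these data into the conclusion of Theorem~\ref{D:Samplingn} then yields both the stated sampling series and the error estimate.

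The only delicate step, and the one I would spend care on, is the $C^\infty$ gluing of the periodic extension — concretely, the evenness at $\psi=0$ together with the geometric vanishing $Vf\equiv 0$ for $\psi\in[\pi/2,\pi)$ that makes the extension smooth across $\psi=\pi$. Once $g$ is known to be a genuine smooth doubly-periodic function, the rest is the bookkeeping of determinants and cosets carried out above.
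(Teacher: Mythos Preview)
Your proposal is correct and follows essentially the same route as the paper: the paper presents Theorem~\ref{D:Sampling} as the direct two-dimensional specialization of Theorem~\ref{D:Samplingn}, after the preceding paragraph records that $Vf$ vanishes for $\psi\in[\pi/2,\pi)$, extends evenly in $\psi$, and is $2\pi$-periodic in both variables with $P=2\pi I_{2\times 2}$, $L_P^\perp=\mathbb{Z}^2$, and the explicit $W$ and $L_W/L_P$ you use. Your write-up is in fact more careful than the paper's, since you spell out the $C^\infty$ gluing at $\psi=0$ (via evenness) and at $\psi=\pi$ (via the geometric vanishing), and you verify $L_P\subset L_W$ by exhibiting the integer matrix $M$; the paper simply asserts these points.
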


\section{The main results}
Our goal is to use the Theorem \ref{D:Sampling} to propose some sampling conditions and derive a corresponding sampling error estimate. For this purpose,  we  assume $\hat{f}(\xi)$ be negligible for $|\xi|>b$, in the sense that the integral $\epsilon_d(f,b):=\int_{|\xi|>b}{|\xi|^{d}\, |\hat{f}(\xi)| \, d\xi}$ is small for all real number $d$. Such a function $f$ is called essentially b-band-limited. Here is the main result of this article:
	\begin{theorem} \label{T:main}
		Let $f\in C_0^{\infty}(D)$ be essentially b-band-limited $(b>1)$, and $g\left(\varphi, \psi\right)=Vf(\varphi, \psi)$. Let $\vartheta<1$ be such that $2- \vartheta^2 <r$. We define the set 
		\begin{align*}
		K=\left\{(k,m):|k|<\dfrac{rb}{\vartheta};\max\left\{|k+m|,|k-m| \right\}<rb\right\}
		\end{align*} 
in $\mathbb{R}^2$. Let $\mW$ be a real non-singular $2\times 2$ matrix such that the sets $K+2\pi\left(\mW^{-1}\right)l, \, l \in \mathbb{Z}^2$ are mutually disjoint.\\
Let
\begin{align*} &\eta_1 (\vartheta,\gamma) = \left(\frac{3}{(1-\vartheta^2)^{3/2}}  + \frac{9}{(1-\vartheta^2)^3} \frac{1}{\gamma} \right) \eta(\vartheta,\gamma), \\& \eta_2(\vartheta,\gamma) = \left(\frac{9}{(1-\vartheta^2)^{3}} + \frac{54}{(1-\vartheta^2)^{9/2}} \frac{1}{\gamma} \right) \eta(\vartheta,\gamma), \\& \eta_3(\vartheta,\gamma) =  \gamma \eta_1(\vartheta,\gamma) + \eta_2(\vartheta,\gamma),
\end{align*} where $\eta\left(\vartheta,\gamma\right)= \gamma \vartheta \exp\left(-\frac{\gamma}{3}(1-\vartheta^2)^{3/2}\right)$.  Then for $b$ being big enough 
	\begin{align*}
		\left\| {S_{W,K}g - g} \right\|_{L^{\infty}} \le \dfrac{12}{\pi}\eta^*\left(\vartheta,rb \right)\left\|f\right\|_{L^1(\mathbb{R}^2)}+\dfrac{4r^2}{\pi\vartheta^3}\left(2b+1\right)\epsilon_1(f,b),
		\end{align*}
where $\eta^*\left(\vartheta,\gamma \right)=\max\left\{ \dfrac{2b}{\vartheta^2}\eta_1\left(\vartheta,\dfrac{\gamma}{\vartheta} \right);\dfrac{1}{r}\eta_2\left(\vartheta,\dfrac{\gamma}{\vartheta^2} \right); \dfrac{\vartheta}{r^2-\bar r^2}\eta_3\left(\vartheta,\dfrac{\gamma}{\vartheta^2} \right)\right\}$. 
	\end{theorem}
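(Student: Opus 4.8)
The plan is to start from the abstract error bound of \reftheo{D:Sampling}, namely $\|S_{W,K}g-g\|_{L^\infty}\le 2\sum_{\mathbb{Z}^2\setminus K}|\hat g(\xi)|$, and reduce the whole statement to a quantitative estimate of the Fourier tail of $g=Vf$ outside $K$. Everything hinges on an explicit formula for the coefficients $\hat g(k,m)$. First I would insert the identity $Vf(\varphi,\psi)=Rf(\varphi+\psi-\tfrac{\pi}{2},r\sin\psi)+Rf(\varphi-\psi-\tfrac{\pi}{2},-r\sin\psi)$ recorded in the Preliminaries, represent each Radon term through the Fourier slice relation $Rf(\omega,s)=\int_{\mathbb{R}}\hat f(\sigma\theta(\omega))\,e^{is\sigma}\,d\sigma$, and expand $e^{\pm ir\sigma\sin\psi}$ by the Jacobi--Anger formula $e^{iz\sin\psi}=\sum_{j}J_j(z)e^{ij\psi}$. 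Writing $\hat f$ in angular harmonics, $\hat f(\sigma\theta(\alpha))=\sum_n\hat f_n(\sigma)e^{in\alpha}$, the $\varphi$- and $\psi$-integrals collapse to Kronecker deltas and yield (using $g$ even in $\psi$ and $2\pi$-periodic)
\[
\hat g(k,m)=e^{-ik\pi/2}\int_{\mathbb{R}}\hat f_k(\sigma)\,\bigl[J_{m-k}(r\sigma)+(-1)^{k+m}J_{m+k}(r\sigma)\bigr]\,d\sigma .
\]
This is the structural heart of the argument: the two constraints $|k\pm m|<rb$ defining $K$ are exactly the orders of the two Bessel factors, while the constraint $|k|<rb/\vartheta$ governs the decay of the angular harmonic $\hat f_k$ itself.

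Second, I would record the two decay mechanisms. Since $\mbox{supp}f\subset D_1(0)$, a second Jacobi--Anger expansion gives $\hat f_k(\sigma)=\frac{(-i)^k}{2\pi}\int_0^{r_0}\tilde f_k(\rho)J_k(\sigma\rho)\,\rho\,d\rho$ with $\tilde f_k$ the angular harmonic of $f$, so that $|\hat f_k(\sigma)|\le\frac{1}{2\pi}\|f\|_{L^1}\sup_{\rho\le1}|J_k(\sigma\rho)|$; together with $|\hat f_k(\sigma)|\le\frac{1}{2\pi}\int_0^{2\pi}|\hat f(\sigma\theta(\alpha))|\,d\alpha$ this controls the integrand on $|\sigma|\le b$, while for $|\sigma|>b$ I would discard the Bessel factors by $|J_\nu|\le1$ and absorb the rest into $\epsilon_1(f,b)$, the polar weight $|\sigma|$ reconstituting the $|\xi|$-factor. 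The quantitative input is the classical exponential bound $|J_\nu(w\nu)|\lesssim\exp\!\left(-\tfrac{\nu}{3}(1-w^2)^{3/2}\right)$ for $0<w<1$, which is precisely the origin of $\eta(\vartheta,\gamma)=\gamma\vartheta\exp(-\tfrac{\gamma}{3}(1-\vartheta^2)^{3/2})$. On $\{|k|\ge rb/\vartheta\}$ one has $\sigma\rho/|k|\le b/|k|\le\vartheta$ for $|\sigma|\le b$ (using $r\ge1$), giving exponential smallness of $J_k(\sigma\rho)$; on $\{|m+k|\ge rb\}$ and $\{|m-k|\ge rb\}$ the same bound applies to $J_{m\pm k}(r\sigma)$ since there $r\sigma\le rb$.

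Third, I would split $\mathbb{Z}^2\setminus K$ into the three regions $\{|k|\ge rb/\vartheta\}$, $\{|k+m|\ge rb\}$, $\{|k-m|\ge rb\}$, bound $|\hat g(k,m)|$ on each by the product of the relevant exponential Bessel factor with the $L^1$- or $\epsilon_1$-size of $\hat f_k$, and sum the resulting series over the lattice variable by comparison with an integral. The summation multiplies $\eta$ by the polynomial corrections $\frac{3}{(1-\vartheta^2)^{3/2}}$, $\frac{9}{(1-\vartheta^2)^{3}}$, $\frac{54}{(1-\vartheta^2)^{9/2}}$ and by the $\frac1\gamma$ factors coming from Euler--Maclaurin/derivative comparison, producing exactly $\eta_1,\eta_2,\eta_3$; the three contributions match the three entries of the maximum defining $\eta^*$, with the rescalings $\gamma\mapsto\gamma/\vartheta$ and $\gamma\mapsto\gamma/\vartheta^2$ reflecting the location of each boundary. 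Collecting constants yields the factor $\frac{12}{\pi}$ in front of $\eta^*(\vartheta,rb)\|f\|_{L^1}$ and $\frac{4r^2}{\pi\vartheta^3}(2b+1)$ in front of $\epsilon_1(f,b)$.

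The main obstacle is the passage near the Bessel transition region $w\to1$, i.e.\ the lattice points sitting just outside the boundaries $|k\pm m|=rb$ and $|k|=rb/\vartheta$, where $J_\nu$ is only polynomially (not exponentially) small; controlling these marginal terms, summing them uniformly, and tracking how they generate the inverse powers of $(1-\vartheta^2)$ together with the explicit numerical constants is the delicate part. A secondary difficulty is the double-Bessel structure of $\hat g(k,m)$: for each region one must decide which decay source ($J_k(\sigma\rho)$ versus $J_{m\pm k}(r\sigma)$) to exploit and verify, using the hypotheses $r>1$ and $2-\vartheta^2<r$, that the crude $(1-\vartheta^2)$-form of the exponent is a valid (if lossy) upper bound, so that the three regions can be estimated independently and recombined into the stated maximum without double counting.
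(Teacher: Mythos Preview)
Your overall architecture is correct and matches the paper: reduce to $2\sum_{\mathbb Z^2\setminus K}|\hat g_{k,m}|$ via \reftheo{D:Sampling}, derive the Bessel representation of $\hat g_{k,m}$ through Fourier slice plus Jacobi--Anger (this is Lemma~\ref{L:gmk}), and then estimate the tail using the exponential bound $|J_\nu(w\nu)|\le\exp\bigl(-\tfrac{\nu}{3}(1-w^2)^{3/2}\bigr)$.

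Where you diverge from the paper is in the treatment of the lattice points with $|k|$ large but $|k\pm m|$ comparatively small. You propose to get exponential decay there from $J_k(\sigma\rho)$ inside the polar representation of $\hat f_k$. The paper does \emph{not} do this. Instead it invokes Palamodov's harmonic localization result (\reflemm{L:pala}, from \cite{palamodov1995localization}), an analytic--continuation estimate which, after choosing $\epsilon=1-\vartheta^2$, produces the bound $\exp\bigl(-\tfrac{|k|}{3}(1-\vartheta^2)^{3/2}\bigr)$ multiplied by the constant $(r^2-\bar r^2)^{-1}$ with $\bar r=2-\vartheta^2$. This is precisely where the hypothesis $2-\vartheta^2<r$ enters and why the third entry of $\eta^*$ carries the factor $\dfrac{\vartheta}{r^2-\bar r^2}$. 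Your Bessel mechanism would give a bound with no $\bar r$ in it, so it cannot reproduce the statement as written; at best it proves a sibling theorem with different constants and without the need for $2-\vartheta^2<r$.

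A second structural difference: the paper does not partition $\mathbb Z^2\setminus K$ into your three sets. It first splits $|\hat g_{k,m}|\le S_1+S_2$ according to whether $J_{k-m}$ or $J_{k+m}$ appears, and then for each $S_j$ uses the three regions $\Sigma_{j1},\Sigma_{j2},\Sigma_{j3}$ determined by the dichotomies $|k|\lessgtr rb/\vartheta^2$ and $|k|\lessgtr |k\mp m|/\vartheta$. The Bessel estimate (Lemmas~\ref{L:int} and~\ref{L:estint}) handles $\Sigma_{j1}$ and $\Sigma_{j3}$, producing the $\eta_1$ and $\eta_2$ entries of $\eta^*$; Palamodov handles $\Sigma_{j2}$ and produces the $\eta_3$ entry. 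Your three-region split would force you, in the zone $\{|k|\ge rb/\vartheta\}$, to sum over \emph{all} $m$ at once, and controlling that $m$-sum cleanly is exactly why the paper refines the decomposition by comparing $|k|$ with $|k\mp m|/\vartheta$.

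In short: your plan is sound as a route to \emph{a} sampling error estimate, but to recover the stated constants (and to explain why the assumption $2-\vartheta^2<r$ is there at all) you must replace your $J_k(\sigma\rho)$ argument in the large-$|k|$/small-$|k\pm m|$ zone by Palamodov's lemma, and adopt the finer six-region decomposition of the paper.
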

In the rest of this article, we present the proof of this theorem. We then discuss two related sampling schemes.

\subsection{Proof of the main result}
We first derive some useful property of Fourier coefficient of V-line transform (see \cite{natterer2001mathematics}).
\begin{lemma} \label{L:gmk}
Suppose $f \in C^{\infty}_{0}\left( D_1(0) \right)$, $g\left( \varphi ,\psi  \right) = Vf\left( \varphi ,\psi  \right)$. Let $\widehat{g}_{k,m}$ be the Fourier coefficient of $g$. Then 
	$$\widehat{g}_{k,m}=\frac{{{i}^{k}}}{2\pi }\int\limits_{\mathbb{R}}{\int\limits_{0}^{2\pi }{\widehat{f}\left( \sigma \theta \left( \alpha  \right) \right)\left[ {{J}_{k-m}}\left( \sigma  \right)+{{J}_{k+m}}\left( \sigma  \right) \right]{{e}^{-ik\alpha }}d\alpha d\sigma }},$$ with $J_k(x)$ is Bessel function of first kind.
\end{lemma}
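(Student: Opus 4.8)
The plan is to combine three ingredients already recorded in Section~\ref{S:pre}: the definition of the Fourier coefficient of a $2\pi$-periodic function, the relation expressing $Vf$ as a sum of two Radon values, and the Fourier slice theorem. Writing $\theta(\alpha)=(\cos\alpha,\sin\alpha)$, I would start from
\[
\widehat{g}_{k,m}=\frac{1}{(2\pi)^2}\int_0^{2\pi}\!\!\int_0^{2\pi} g(\varphi,\psi)\,e^{-i(k\varphi+m\psi)}\,d\varphi\,d\psi,
\]
and substitute $g(\varphi,\psi)=Rf(\varphi+\psi-\tfrac{\pi}{2},r\sin\psi)+Rf(\varphi-\psi-\tfrac{\pi}{2},-r\sin\psi)$. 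Inverting the one-dimensional Fourier transform in the slice identity $(R_\varphi f)^{\wedge}(\sigma)=(2\pi)^{1/2}\hat f(\sigma\theta(\varphi))$ gives the representation $Rf(\varphi,s)=\int_{\mathbb{R}}\hat f(\sigma\theta(\varphi))\,e^{is\sigma}\,d\sigma$, which turns each Radon term into an absolutely convergent $\sigma$-integral of $\hat f$ against a phase $e^{\pm i r\sigma\sin\psi}$. Because $f\in C_0^\infty$, $\hat f$ is Schwartz, so Fubini applies throughout and all interchanges of integration are legitimate; this is the routine but necessary bookkeeping that underpins every step.

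Next I would perform the two angular integrations separately. For the $\varphi$-integration I use that $\alpha\mapsto\hat f(\sigma\theta(\alpha))$ is $2\pi$-periodic, so the shift $\alpha=\varphi\pm\psi\mp\tfrac{\pi}{2}$ leaves the interval of integration effectively unchanged and merely extracts phase factors: the first term produces $e^{ik\psi}e^{-ik\pi/2}$ and the second $e^{-ik\psi}e^{-ik\pi/2}$, each multiplying the angular coefficient $\int_0^{2\pi}\hat f(\sigma\theta(\alpha))e^{-ik\alpha}\,d\alpha$. The factor $e^{-ik\pi/2}=(-i)^k$ is the seed of the $i^k$ prefactor in the statement. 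The $\psi$-integration is where the Bessel functions appear: applying the Jacobi--Anger integral representation $\frac{1}{2\pi}\int_0^{2\pi}e^{iz\sin\psi}e^{in\psi}\,d\psi=J_{-n}(z)$ (and its sign-reflected companion) to the two resulting integrals converts them into $J_{m-k}(r\sigma)$ and $J_{-(k+m)}(r\sigma)$.

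Finally I would tidy the indices and constants. Using $J_{-n}(z)=(-1)^nJ_n(z)$ rewrites the pair $J_{m-k}(r\sigma)+J_{-(k+m)}(r\sigma)$ as $(-1)^{k+m}\big[J_{k-m}(r\sigma)+J_{k+m}(r\sigma)\big]$, since $k-m$ and $k+m$ have equal parity. Absorbing this into the $(-i)^k$ accumulated from the two $\varphi$-shifts gives the prefactor $i^k/(2\pi)$, and leaves exactly the combination $J_{k-m}+J_{k+m}$ multiplying $\int_0^{2\pi}\hat f(\sigma\theta(\alpha))e^{-ik\alpha}\,d\alpha$ inside the $\sigma$-integral, which is the claimed formula.

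The step I expect to be most delicate is this final reconciliation of constants and arguments rather than any hard analysis. The $\psi$-integral naturally produces Bessel functions of argument $r\sigma$, so matching the bare $\sigma$ written in the statement is a rescaling/notational identification, and the phase bookkeeping---the interplay of $e^{-ik\pi/2}$, the reflection $J_{-n}=(-1)^nJ_n$, and the parities of $k\pm m$ (which may leave a residual sign $(-1)^m$ depending on the orientation conventions in the $Vf$--$Rf$ relation and the slice theorem)---must be carried out without slips. Everything else is absolutely convergent, Schwartz-class manipulation justified by $f\in C_0^\infty(D_1(0))$, so no convergence or interchange issue should obstruct the argument.
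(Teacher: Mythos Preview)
Your outline matches the paper's strategy: Fourier coefficients of $g$, the $Vf$--$Rf$ relation, the slice theorem, and an angular integral that produces the Bessel functions. The one substantive difference is the order in which the two angular integrations are carried out. The paper performs the $\psi$-integral first via the substitution $\alpha=\varphi\pm\psi-\tfrac{\pi}{2}$, extends the resulting half-period $\alpha$-integral to $[0,2\pi]$ using the Radon symmetry $Rf(\alpha+\pi,-s)=Rf(\alpha,s)$, and then reads off $J_{k\mp m}(r\sigma)$ from the \emph{full-period} $\varphi$-integral via $J_n(x)=\tfrac{i^{-n}}{2\pi}\int_0^{2\pi}e^{ix\cos\varphi-in\varphi}\,d\varphi$. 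Your plan instead does the $\varphi$-shift first and leaves the Bessel identification to the $\psi$-variable; but the relation $g(\varphi,\psi)=Rf(\cdots)+Rf(\cdots)$ is valid only for $|\psi|<\pi/2$ (near $\psi=\pi$ the two Radon lines meet $D_1(0)$ while the outward-pointing V-line does not), so you cannot invoke the full-period Jacobi--Anger formula in $\psi$ directly. This half-period obstruction is exactly the source of the residual $(-1)^m$ you worried about; it can be repaired by the same Radon symmetry, but in your ordering that step is less transparent than in the paper's. Finally, your observation that the computation naturally produces $J_{k\pm m}(r\sigma)$ rather than $J_{k\pm m}(\sigma)$ is correct and agrees with what the paper's own proof actually obtains.
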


\begin{proof}
	We have
	\begin{eqnarray*}
	{\widehat{g}}_{k,m}&=&\frac{1}{4{{\pi }^{2}}}\int\limits_{0}^{2\pi }{\int\limits_{-\pi}^{\pi}{g\left( \varphi ,\psi  \right){{e}^{-i\left( k\varphi +m\psi  \right)}}}d\varphi d\psi} \\ 
	&=&\frac{1}{{4\pi}^{2}}\int\limits_{0}^{2\pi }\int \limits_{-\frac{\pi}{2}}^{\frac{\pi}{2}}{\left[ Rf\left( \varphi +\psi -\frac{\pi }{2}, r\theta \left( \varphi +\psi -\frac{\pi }{2} \right)\theta \left( \varphi  \right) \right) + Rf\left( \varphi -\psi -\frac{\pi }{2}, r\theta \left( \varphi -\psi -\frac{\pi}{2} \right) \theta \left( \varphi  \right) \right)\right]\times} \\
	&&{e}^{-i\left( k\varphi +m\psi  \right)}d\varphi d\psi \\ &=& I_1 + I_2,
	\end{eqnarray*}	
where $$I_{1,2} =\int\limits_{0}^{2\pi }{\int\limits_{-\frac{\pi }{2}}^{\frac{\pi }{2}}{Rf\left( \varphi + \psi -\frac{\pi }{2},r\theta \left( \varphi \pm \psi -\frac{\pi }{2} \right) \theta \left( \varphi  \right) \right) {{e}^{-im\psi }d\varphi}d\psi }}.$$ Changing variable $\alpha =\varphi +\psi -\frac{\pi }{2}$, the integral inside of $I_1$ becomes 

\begin{align*}
 \int\limits_{-\frac{\pi }{2}}^{\frac{\pi }{2}}{Rf\left( \varphi +\psi -\frac{\pi }{2},r\theta \left( \varphi +\psi -\frac{\pi }{2} \right) \theta \left( \varphi  \right) \right)} &{{e}^{-im\psi }}d\psi \\
 =&{\left( -i \right)}^{k}\int\limits_{\varphi -\pi }^{\varphi }{Rf\left[ \alpha ,r\theta \left( \alpha  \right) \theta \left( \varphi  \right) \right]{{e}^{-im\alpha }} {{e}^{im\varphi }} d\alpha }\\
 =&\frac{{{\left( -i \right)}^{k}}}{2} {{e}^{im\varphi }} \int\limits_{0}^{2\pi }{Rf\left[ \alpha ,r\theta \left( \alpha  \right) \theta \left( \varphi  \right) \right]{{e}^{-im\alpha }}d\alpha } \\
 =&\frac{{{\left( -i \right)}^{k}}}{2\sqrt{2\pi}}{{e}^{im\varphi }} \int\limits_{0}^{2\pi }{\left( \int\limits_{\mathbb{R}}{(Rf\widehat{)(}\alpha ,\sigma ) {{e}^{ir\theta \left( \alpha  \right)  \theta \left( \varphi  \right) \sigma }}d\sigma } \right)} {{e}^{-im\alpha }}d\alpha \\ 
  =&\frac{{{\left( -i \right)}^{k}}}{2} {{e}^{im\varphi }} \int\limits_{0}^{2\pi }{\int\limits_{\mathbb{R}}{\widehat{f}\left( \sigma \theta \left( \alpha  \right) \right) {{e}^{-im\alpha +ir\sigma \cos \left( \varphi -\alpha  \right)}}d\sigma d\alpha}}. 
\end{align*}

 \noindent Hence,
\begin{align*}
{{I}_{1}}=&\frac{{{\left( -i \right)}^{k}}}{8{{\pi }^{2}}} \int\limits_{0}^{2\pi }{\int\limits_{\mathbb{R}}{\widehat{f}\left( \sigma \theta \left( \alpha  \right) \right)}} \int\limits_{0}^{2\pi }{{{e}^{-i\left( k-m \right)\varphi }} {{e}^{-im\alpha +ir\sigma \cos \left( \varphi -\alpha  \right)}}d\varphi  d\sigma  d\alpha } \nonumber \\ 
=&\frac{{{\left( -i \right)}^{k}}}{8{{\pi }^{2}}} \int\limits_{0}^{2\pi }{\int\limits_{\mathbb{R}}{\widehat{f}\left( \sigma \theta \left( \alpha  \right) \right) {{e}^{-ik\alpha }}\left( \int\limits_{0}^{2\pi }{{{e}^{-i\left( k-m \right)\left( \varphi -\alpha  \right)+ir\sigma \cos \left( \varphi -\alpha  \right)}}d\varphi } \right)d\alpha d\sigma }}. \nonumber \\ 
\end{align*}
Noting that
	$$ J_{k}(x)=\dfrac{i^{-k}}{2\pi}\int_0^{2\pi}{e^{ix\cos\varphi-ik\varphi}d\varphi},$$
we obtain
\begin{align}
I_{1} = \frac{{{i}^{k}}}{4\pi } \int\limits_{0}^{2\pi }{\int\limits_{\mathbb{R}}{\widehat{f}\left( \sigma \theta \left( \alpha  \right) \right) {{e}^{-ik\alpha }}}} {{J}_{k-m}}\left( r\sigma  \right)d\alpha d\sigma . \label{eq1}
\end{align}
Similarly,
\begin{align}
I_{2}  =&\frac{i^k}{4\pi } \int\limits_{0}^{2\pi }{\int\limits_{\mathbb{R}}{\widehat{f}\left( \sigma \theta \left( \alpha  \right) \right) {{e}^{-ik\alpha }} {{J}_{k+m}}\left( r\sigma  \right)d\alpha  d\sigma }}. \label{eq2}
\end{align}
Combining (\ref{eq1}) and (\ref{eq2}), we conclude 
$$\hat{g}_{k,m}=\frac{{{i}^{k}}}{4\pi }\int\limits_{0}^{2\pi }{\int\limits_{\mathbb{R}}{\widehat{f}\left( \sigma \theta \left( \alpha  \right) \right)\left[ {{J}_{k-m}}\left( r\sigma  \right)+{{J}_{k+m}}\left( r\sigma  \right) \right] {{e}^{-ik\alpha }}d\alpha d\sigma }}.\label{eq3}$$
\end{proof}

%\end{theorem}

%To estimate absolute value of $\hat{g}_{k,m}$, we restrict the integrals in Theorem~\ref{eq3} on the domain $|\sigma|\leq \sigma_0$ where $\sigma_0$ chosen suitably later. 
In order to estimate $\hat g_{m,k}$ we will need the following result 
\begin{lemma} \label{L:int}
Suppose $f \in C^{\infty}_{0}\left( D_1(0) \right)$, $g\left( \varphi ,\psi  \right) = Vf\left( \varphi ,\psi  \right)$. Let $\widehat{g}_{k,m}$ be the Fourier coefficient of $g$. Then 
	$$\Big| \int\limits_{\mathbb{R}} \int\limits_{0}^{2\pi }{\widehat{f}\left( \sigma \theta \left( \alpha  \right) \right) {{J}_{l}}\left( \sigma  \right) {{e}^{-ik\alpha }}d\alpha d\sigma } \Big| \leq\int_{D_1(0)}{|f(x)|\left|\int^{\sigma}_{-\sigma}{J_{l}(r\sigma)J_k(\sigma|x|)d\sigma}\right|dx} + 2 \epsilon_{-1}(f,\sigma),$$

\end{lemma}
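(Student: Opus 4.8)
The plan is to start from the expression for $\widehat g_{k,m}$ established in \reflemm{L:gmk}, but focus on a single summand of the form
$$
A \coloneqq \int\limits_{\mathbb R}\int\limits_{0}^{2\pi}\widehat f\!\left(\sigma\theta(\alpha)\right) J_l(\sigma)\, e^{-ik\alpha}\,d\alpha\, d\sigma,
$$
and the key idea is to rewrite the $\alpha$-integral as a quantity involving $f$ directly via the Fourier representation. First I would substitute the definition $\widehat f(\sigma\theta(\alpha)) = \frac{1}{2\pi}\int_{D_1(0)} f(x)\, e^{-i\sigma\theta(\alpha)\cdot x}\,dx$ and interchange the order of integration (justified since $f$ is smooth and compactly supported). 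Writing $x = |x|\,\theta(\omega)$ in polar form, the exponent becomes $-i\sigma|x|\cos(\alpha-\omega)$, so the inner $\alpha$-integral $\int_0^{2\pi} e^{-ik\alpha}e^{-i\sigma|x|\cos(\alpha-\omega)}\,d\alpha$ is, up to the standard factor $2\pi\,i^{-k}$ and a phase $e^{-ik\omega}$, exactly a Bessel function $J_k(\sigma|x|)$ by the same integral identity $J_k(z)=\frac{i^{-k}}{2\pi}\int_0^{2\pi}e^{iz\cos\varphi-ik\varphi}\,d\varphi$ already used in \reflemm{L:gmk}. This collapses the double integral over $(\alpha,\sigma)$ into $\int_{D_1(0)} f(x)\,e^{-ik\omega(x)}\left(\int_{\mathbb R} J_l(r\sigma) J_k(\sigma|x|)\,d\sigma\right)dx$ up to constants, at which point the triangle inequality and $|e^{-ik\omega}|=1$ bound $|A|$ by $\int_{D_1(0)}|f(x)|\,|\!\int_{\mathbb R} J_l(r\sigma)J_k(\sigma|x|)\,d\sigma|\,dx$.

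The remaining gap between this clean bound and the stated inequality is the replacement of the full $\sigma$-integral over $\mathbb R$ by the truncated one over $[-\sigma,\sigma]$ (the statement's inner integral has finite limits $\pm\sigma$, presumably meaning a band-limiting cutoff at the frequency radius), and the appearance of the error term $2\epsilon_{-1}(f,\sigma)$. I would handle this by splitting the frequency integral at the band radius: on the low-frequency part $|\cdot|\le\sigma$ one keeps the truncated integral, while the high-frequency tail is controlled by the essential band-limiting hypothesis. Using the uniform bound $|J_l|\le 1$ and $|J_k|\le 1$ for Bessel functions, the tail contributes at most $\int_{|\xi|>\sigma}|\widehat f(\xi)|\,d\xi$-type quantities; matching this against the definition $\epsilon_{d}(f,b)=\int_{|\xi|>b}|\xi|^d|\widehat f(\xi)|\,d\xi$ with $d=-1$ (and absorbing constants) produces the $2\epsilon_{-1}(f,\sigma)$ term. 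One must be a little careful to account for both the $\pm$ contributions, which is the likely source of the factor $2$.

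The main obstacle I expect is the Fubini interchange and the convergence of the $\sigma$-integral of the product of two Bessel functions: $\int_{\mathbb R} J_l(r\sigma)J_k(\sigma|x|)\,d\sigma$ is only conditionally convergent (each Bessel factor decays like $\sigma^{-1/2}$, so the product decays like $\sigma^{-1}$), so the rearrangement of integration order needs justification, and this is precisely why a truncation to $[-\sigma,\sigma]$ appears rather than an unconditional integral over all of $\mathbb R$. I would address this by performing all manipulations on the truncated integral first and only then passing to the limit, using the essential band-limitedness to bound the discarded tail uniformly. Everything else — the polar substitution, the Bessel integral identity, and the triangle inequality — is routine once the interchange is secured.
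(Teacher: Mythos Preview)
Your approach is correct and is essentially the paper's proof: split off the tail $|\sigma|$ large in the original integral, bound it by $2\epsilon_{-1}(f,\sigma)$ using $|J_l|\le 1$ together with polar coordinates in frequency space, and on the truncated part insert the definition of $\widehat f$ and evaluate the $\alpha$-integral via the Bessel identity to obtain $\int_{D_1(0)}|f(x)|\bigl|\int_{-\sigma}^{\sigma}J_l(r\sigma)J_k(\sigma|x|)\,d\sigma\bigr|\,dx$. The only point to tighten is the order of operations: the split must be performed \emph{before} the substitution (as you yourself note in your final paragraph), because after substitution $\widehat f$ no longer appears and the tail $\int_{|\sigma|\,\text{large}}|J_l(r\sigma)J_k(\sigma|x|)|\,d\sigma$ diverges --- the band-limiting hypothesis on $\widehat f$ is the sole control on the high-frequency part, so it must be invoked while $\widehat f$ is still in the integrand.
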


%Using Lemma~\ref{D:Lemma}, we can write 
%\begin{align*} 
%\hat{g}_{k,m}=&\frac{{{i}^{k}}}{4\pi }\left(\int\limits_{-\sigma_1}^{\sigma_1}+ \int\limits_{|\sigma| \geq \sigma_1}\right) \int\limits_{0}^{2\pi }\widehat{f}\left( \sigma \theta \left( \alpha  \right) \right) {J}_{k-m}\left( r\sigma  \right) {e}^{-ik\alpha } d\alpha d\sigma \\
%+&\frac{{{i}^{k}}}{4\pi }\left(\int\limits_{-\sigma_2}^{\sigma_2}+ \int\limits_{|\sigma| \geq \sigma_2}\right) \int\limits_{0}^{2\pi }\widehat{f}\left( \sigma \theta \left( \alpha  \right) \right) {J}_{k+m}\left( r\sigma  \right) {e}^{-ik\alpha } d\alpha d\sigma. \end{align*} 

\begin{proof}

Since the Bessel function is bounded by $1$, 
\begin{align*} \Big| \int\limits_{\mathbb{R}} \int\limits_{0}^{2\pi }{\widehat{f}\left( \sigma \theta \left( \alpha  \right) \right) {{J}_{l}}\left( \sigma  \right) {{e}^{-ik\alpha }}d\alpha d\sigma } \Big|
 & \leq \dfrac{1}{4\pi}\left|\int^{\sigma}_{-\sigma}\int^{2\pi}_0\hat{f}(\sigma\theta)e^{-ik\alpha}J_{l}(r\sigma)d\alpha d\sigma \right|+\dfrac{1}{2\pi}\epsilon_{-1}(f,\sigma).\end{align*} 
Expressing $\hat{f}(\sigma \theta)$ by its definition, we obtain
\begin{align*}
	\int\limits_{0}^{2\pi }{\widehat{f}\left( \sigma \theta \left( \alpha  \right) \right){{e}^{-ik\alpha }}d\alpha }=&\dfrac{1}{2\pi}\int^{2\pi}_0\int_{D_1(0)}f(x)e^{-i\sigma\theta x}e^{-ik\alpha}dxd\alpha\\
	=&\dfrac{1}{2\pi}\int_{D_1(0)}f(x)\int^{2\pi}_0e^{-i\left(\alpha|x|\cos (\alpha-\psi)+k(\alpha-\psi)\right)}e^{-ik\psi}d\alpha dx\\
	=&\int_{D_1(0)}f(x)J_{k}(-\sigma|x|)dx.
\end{align*}
This gives us the desired in equality. 
%Therefore,
%	\begin{align}
%	\left|\widehat{g}_{k,m}\right| 
%	&\le \dfrac{1}{4\pi}\int_{D_1(0)}{|f(x)|\left|\int^{\sigma_1}_{-\sigma_1}{J_{k-m}(r\sigma)J_k(\sigma|x|)d\sigma}\right|dx} +\dfrac{1}{2\pi}\epsilon_{-1}(f,\sigma_1)\nonumber\\
%	&+\dfrac{1}{4\pi}\int_{D_1(0)}{|f(x)|\left|\int^{\sigma_2}_{-\sigma_2}{J_{k+m}(r\sigma)J_k(\sigma|x|)d\sigma}\right|dx}+\dfrac{1}{2\pi}\epsilon_{-1}(f,\sigma_2). \label{eq6}
%	\end{align}
	
\end{proof} 	
The following inequality in \cite{siegel1953inequality}
$$ J_v(vs) \leq \dfrac{s^v \exp\left(v(1-s^2)^{1/2}\right))}{\left(1+(1-s^2)\right)^{1/2}}, \hspace{1cm} v\geq 0,~ 0<s\leq 1,$$
yields 
$$ J_v(vs) \leq e^{-\frac{s}{3} (1-s^2)^{3/2}}, \hspace{1cm} v\geq 0,\, 0<s \leq 1.$$
We obtain
	\begin{align}  \sup\limits_{|x|\leq 1}\int\limits_{-\vartheta k}^{\vartheta k}{\left| J_k(\sigma|x| )\right|d\sigma} & \leq 2 k \vartheta\sup\limits_{|x|\leq 1}\int\limits_{0}^{1}J_k(k\sigma \vartheta |x|) d\sigma  \leq 2 k \vartheta \int\limits_{0}^{1} e^{-\frac{\gamma}{3} (1-|\vartheta \sigma |^2)^{3/2}} d\sigma  \leq 2\eta(\vartheta,\gamma),\label{E:eta1} \end{align} 

%	\begin{align} \nonumber  \sup\limits_{|x|\leq 1} \int\limits_{-\vartheta k}^{\vartheta k}{\left| J_k(\sigma|x|)\right|d\sigma} & \leq 2 k \vartheta \sup\limits_{|x|\leq 1} \int\limits_{0}^{1}J_k(\sigma k \vartheta |x|) d\sigma  \leq 2 k \vartheta \sup\limits_{|x|\leq 1} \int\limits_{0}^{1} e^{-\frac{\gamma}{3} (1-|\vartheta \sigma x|^2)^{3/2}} d\sigma  \\ &\leq 2\eta(\vartheta,\gamma),\label{E:eta1} \end{align} 
where
$$\eta\left(\vartheta,\gamma\right)= k\vartheta \exp\left(-\frac{\gamma}{3}(1-\vartheta^2)^{3/2}\right).$$

\begin{lemma}\label{L:estint} Assume that $|k|< \vartheta^{-1}|l|$. Choosing $\sigma =\vartheta|l|$, then the following holds. 
	\begin{align*}
	\int_{D_1(0)}{|f(x)|\left|\int^{\sigma}_{-\sigma}{J_{l}(r\sigma)J_k(\sigma|x|)d\sigma}\right|dx}\le 2\eta\left(\vartheta,|l|\right)\left\|f\right\|_{L^1(\mathbb{R}^2)}.
	\end{align*}
\end{lemma}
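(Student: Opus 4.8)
The plan is to decouple the two Bessel factors and spend all the available decay on $J_k$. Since every Bessel function of the first kind satisfies $|J_l|\le 1$ on the real line, I would first estimate the inner integral pointwise in $x$ by discarding $J_l$ altogether:
\[
\left|\int_{-\vartheta|l|}^{\vartheta|l|}J_l(r\sigma)\,J_k(\sigma|x|)\,d\sigma\right|\le\int_{-\vartheta|l|}^{\vartheta|l|}\bigl|J_k(\sigma|x|)\bigr|\,d\sigma .
\]
This reduces the lemma to a uniform (in $|x|\le 1$) estimate of the single-Bessel integral on the right, which is exactly the object handled in (\ref{E:eta1}), except that the cut-off is now $\vartheta|l|$ instead of $\vartheta k$. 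The role of the hypothesis $|k|<\vartheta^{-1}|l|$ and of the prescribed choice $\sigma=\vartheta|l|$ is precisely to make this change of cut-off admissible.

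Next I would use that $|J_k|$ is even and rescale. Writing the integral as $2\int_0^{\vartheta|l|}|J_k(\sigma|x|)|\,d\sigma$ and substituting $\sigma=\vartheta|l|\,u$ produces the prefactor $\vartheta|l|$ occurring in $\eta(\vartheta,|l|)$:
\[
\int_{-\vartheta|l|}^{\vartheta|l|}\bigl|J_k(\sigma|x|)\bigr|\,d\sigma
=2\vartheta|l|\int_0^1\bigl|J_k(\vartheta|l|\,u|x|)\bigr|\,du .
\]
It then remains to bound the integrand by $\exp\!\bigl(-\tfrac{|l|}{3}(1-\vartheta^2)^{3/2}\bigr)$ uniformly in $u,|x|\in[0,1]$. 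For this I would invoke the Siegel-type pointwise bound $J_v(vs)\le e^{-\frac{v}{3}(1-s^2)^{3/2}}$ displayed before the lemma. Because the argument obeys $\vartheta|l|\,u|x|\le\vartheta|l|$, the hypothesis places it in the range where this exponential bound is in force, and since $1-(\vartheta u|x|)^2\ge 1-\vartheta^2$ the integrand is largest at $u=|x|=1$, giving the uniform bound $e^{-\frac{|l|}{3}(1-\vartheta^2)^{3/2}}$.

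Putting the pieces together yields
\[
\sup_{|x|\le 1}\int_{-\vartheta|l|}^{\vartheta|l|}\bigl|J_k(\sigma|x|)\bigr|\,d\sigma\le 2\vartheta|l|\,e^{-\frac{|l|}{3}(1-\vartheta^2)^{3/2}}=2\eta(\vartheta,|l|),
\]
and integrating this against $|f(x)|$ over $D_1(0)$, then pulling the supremum out of the $x$-integral, produces the factor $\int_{D_1(0)}|f(x)|\,dx=\|f\|_{L^1(\mathbb{R}^2)}$, which is the asserted inequality.

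The step I expect to be the main obstacle is the order-matching in the Siegel estimate: the exponent in the conclusion must carry $|l|$ rather than $k$, so the decay of $J_k$—whose natural rate is governed by its own order—has to be converted into an $|l|$-rate. This is the only place where the hypothesis $|k|<\vartheta^{-1}|l|$ together with the cut-off $\sigma=\vartheta|l|$ is genuinely used, and making that inequality point in the correct direction (rather than producing an exponent in $k$, or a factor $(r\vartheta)^2$ in place of $\vartheta^2$, as one would get by instead spending the decay on $J_l$) is the crux of the argument.
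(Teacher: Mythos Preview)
Your approach has a genuine gap at the Siegel step. The pointwise bound $J_v(vs)\le e^{-\frac{v}{3}(1-s^2)^{3/2}}$ carries the \emph{order} $v$ in the exponent, so applying it to $J_k(\vartheta|l|\,u|x|)$ forces $v=|k|$ and $s=\vartheta|l|\,u|x|/|k|$. Two things then go wrong. First, the hypothesis $|k|<\vartheta^{-1}|l|$ is an \emph{upper} bound on $|k|$; it does not give $s\le 1$, and nothing in the statement excludes $|k|$ small (even $k=0$), in which case the argument far exceeds the order and Siegel simply does not apply (for instance $\int_{-\vartheta|l|}^{\vartheta|l|}|J_0(\sigma|x|)|\,d\sigma$ has no exponential decay in $|l|$). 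Second, even on the range where Siegel is valid, the exponent it produces is $-\tfrac{|k|}{3}(1-s^2)^{3/2}$; an upper bound on $|k|$ cannot be used to replace $|k|$ by the larger quantity $|l|$ inside a decaying exponential. Your display obtains $e^{-\frac{|l|}{3}(1-\vartheta^2)^{3/2}}$ by writing $s=\vartheta u|x|$, which tacitly treats $|l|$ as the order of the Bessel function --- precisely the illegitimate move you flagged as the ``main obstacle'' but did not actually overcome.

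The paper does the opposite: it bounds $|J_k|\le 1$ and spends the Siegel decay on $J_l(r\sigma)$, whose order is $|l|$, so the exponent $-\tfrac{|l|}{3}(1-\vartheta^2)^{3/2}$ comes out directly from \eqref{E:eta1}. With the cutoff taken as $\vartheta|l|/r$ (this is what the paper's proof actually uses, and what matches the $\epsilon_{-1}(f,\vartheta|k-m|/r)$ terms appearing in the proof of the main theorem; the ``$\sigma=\vartheta|l|$'' in the lemma statement is a slip), the argument $r\sigma$ stays in $[-\vartheta|l|,\vartheta|l|]$, Siegel applies with ratio at most $\vartheta<1$, and one in fact obtains the slightly sharper $\tfrac{2}{r}\eta(\vartheta,|l|)\|f\|_{L^1}$. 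This also dissolves the $(r\vartheta)^2$ objection in your last paragraph, which was an artifact of the misstated cutoff. Note finally that the hypothesis $|k|<\vartheta^{-1}|l|$ plays no role in this estimate; it only marks the regime complementary to the one handled by the Palamodov-type bound in the next lemma.
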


\begin{proof}
Indeed, using the fact that $|J_k(s)| \leq 1$ for all $s \geq 1$, we obtain
	\begin{align*}
	\int_{D_1(0)}{|f(x)|\left|\int^{\sigma}_{-\sigma}{J_{l}(r\sigma)J_k(\sigma|x|)d\sigma}\right|dx}\le &\left(\int_{|\sigma|\le \vartheta|l|/r}\left|J_{l}(r\sigma )\right|d\sigma\right) \, \|f\|_{L^1(\mathbb{R}^2)}\nonumber\\
	\le& \left(\dfrac{1}{r}\int_{|\sigma|\le \vartheta|l|}\left|J_{l}(\sigma )\right|d\sigma\right) \left\|f\right\|_{L^1(\mathbb{R}^2)}\nonumber\\
	\stackrel{(\ref{E:eta1})}{\le} & \dfrac{2}{r}\eta\left(\vartheta,|l|\right)\left\|f\right\|_{L^1(\mathbb{R}^2)}. 
	\end{align*}
%	Similarly for the case $|k|< \vartheta^{-1}|k+m|$:
%	
%	\begin{itemize}
%	\item If $|k| < \vartheta^{-1}|k+m|$ choose $\sigma_2=\dfrac{\vartheta|k+m|}{r}$ then 
%	\begin{align}
%		\int_{D_1(0)}{|f(x)|\left|\int^{\sigma_2}_{-\sigma_2}{J_{k+m}(r\sigma)J_k(\sigma|x|)d\sigma}\right|dx}\le &\int_{|\sigma|\le \dfrac{\vartheta|k+m|}{r}}\left|J_{k+m}(r\sigma)\right|d\sigma\left\|f\right\|_{L^1(\mathbb{R}^2)}\nonumber\\		
%		\le &\dfrac{1}{ r}\int_{|\sigma|\le \vartheta|k+m|}\left|J_{k+m}(\sigma )\right|d\sigma\left\|f\right\|_{L^1(\mathbb{R}^2)}\nonumber\\
%		\stackrel{(\ref{E:eta1})}{\le}  & \dfrac{2}{r}\eta\left(\vartheta,|k+m|\right)\left\|f\right\|_{L^1(\mathbb{R}^2)}. \label{eq12}
%	\end{align}
%		\item Choose $\sigma_2 = \dfrac{\vartheta|k+m|}{r}$ then 
%	\begin{align}
%	\int_{D_1(0)}{|f(x)|\left|\int^{\sigma_2}_{-\sigma_2}{J_{k+m}(r\sigma)J_k(\sigma|x|)d\sigma}\right|dx}\le &\int_{|\sigma|\le \dfrac{\vartheta|k+m|}{r}}\left|J_{k+m}(r\sigma )\right|d\sigma\left\|f\right\|_{L^1(\mathbb{R}^2)}\nonumber\\
%	\le& \dfrac{1}{r}\int_{|\sigma|\le \vartheta|k+m|}\left|J_{k+m}(\sigma )\right|d\sigma\left\|f\right\|_{L^1(\mathbb{R}^2)}\nonumber\\
%	\stackrel{(\ref{E:eta1})}{\le} & \dfrac{2}{r}\eta\left(\vartheta,\dfrac{r|k+m|}{\vartheta}\right)\left\|f\right\|_{L^1(\mathbb{R}^2)}. \label{eq13}
%	\end{align}
%	\end{itemize}
\end{proof} 
We will also need the following result
	\begin{lemma} \label{L:pala}
Suppose $f \in C^{\infty}_{0}\left( D_1(0) \right)$ and $\bar r:= 2 -\vartheta^2 <r$, then for $|k|\geq \vartheta^{-1}|l|$,
$$\Big| \int\limits_{\mathbb{R}} \int\limits_{0}^{2\pi }{\widehat{f}\left( \sigma \theta \left( \alpha  \right) \right) {{J}_{l}}\left( \sigma  \right) {{e}^{-ik\alpha }}d\alpha d\sigma } \Big| \leq \dfrac{1}{2\pi (r^2 - \bar r^2) k \vartheta} \eta(k,\vartheta) \|f\|_{L^1(\R^2)}.$$ 

\end{lemma}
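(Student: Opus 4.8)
The plan is to start from \reflemm{L:int}, which (after choosing a truncation level $\sigma_0$) bounds the oscillatory double integral on the left by
\[
\int_{D_1(0)}|f(x)|\,\Big|\int_{-\sigma_0}^{\sigma_0}J_l(r\sigma)\,J_k(\sigma|x|)\,d\sigma\Big|\,dx
\]
plus a tail of size $2\epsilon_{-1}(f,\sigma_0)$, and then to control the inner single–variable Bessel–product integral in the present regime $|k|\ge \vartheta^{-1}|l|$, i.e. $|l|\le\vartheta|k|$. The geometric point I would exploit is that $\sigma\mapsto J_l(r\sigma)$ has its turning point at $\sigma=|l|/r$, while $\sigma\mapsto J_k(\sigma|x|)$ has its turning point at $\sigma=|k|/|x|\ge |k|$; since $|l|\le\vartheta|k|$ and $|x|\le 1<r$, these are well separated, so over the whole truncated range the order-$k$ factor stays \emph{below} its own turning point and is therefore exponentially small. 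Concretely I would take $\sigma_0=\vartheta|k|$, so that $\sigma|x|\le\vartheta|k|$ throughout; writing $\sigma|x|=|k|s$ with $s\le\vartheta$ and invoking the Siegel bound recorded before \mref{E:eta1} gives $|J_k(\sigma|x|)|\le\exp\!\big(-\tfrac{|k|}{3}(1-\vartheta^2)^{3/2}\big)$ uniformly in $|x|\le 1$.

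To manufacture the denominator $r^2-\bar r^2$ I would use the Lommel-type identity obtained from the two Bessel ODEs satisfied by $u(\sigma)=J_l(r\sigma)$ and $v(\sigma)=J_k(\sigma|x|)$. Forming the Wronskian $W=u'v-uv'$ and using $u''=-\tfrac1\sigma u'-(r^2-\tfrac{l^2}{\sigma^2})u$ and $v''=-\tfrac1\sigma v'-(|x|^2-\tfrac{k^2}{\sigma^2})v$ yields
\[
(r^2-|x|^2)\!\int_0^{\sigma_0}\!uv\,d\sigma=-\big[W\big]_0^{\sigma_0}-\int_0^{\sigma_0}\frac{W}{\sigma}\,d\sigma+(l^2-k^2)\!\int_0^{\sigma_0}\frac{uv}{\sigma^2}\,d\sigma,
\]
so that $\int_0^{\sigma_0}uv\,d\sigma$ acquires the prefactor $(r^2-|x|^2)^{-1}$. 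Since $|x|\le 1<\bar r=2-\vartheta^2$, I would then bound $(r^2-|x|^2)^{-1}\le(r^2-\bar r^2)^{-1}$, which is legitimate precisely because the hypothesis $\bar r<r$ keeps this denominator positive.

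Every term on the right inherits a factor $v=J_k(\sigma|x|)$ (directly, or through $W$ and $v'=|x|J_k'(\sigma|x|)$, whose order-$(k\pm1)$ components are likewise exponentially small on the range), while $u,u'$ stay bounded by $1$ and the weights $\sigma^{-1},\sigma^{-2}$ are integrable near $\sigma=0$ because $J_l(r\sigma)\sim\sigma^{|l|}$ and $J_k(\sigma|x|)\sim\sigma^{|k|}$ vanish there. Collecting these, each contribution is dominated by $(r^2-\bar r^2)^{-1}$ times $\exp\!\big(-\tfrac{|k|}{3}(1-\vartheta^2)^{3/2}\big)$, which I would package as $\eta(\vartheta,k)=k\vartheta\exp\!\big(-\tfrac{|k|}{3}(1-\vartheta^2)^{3/2}\big)$ divided by the factor $k\vartheta$ appearing in the claim (so that the stated $(k\vartheta)^{-1}\eta$ is really $\exp(-\tfrac{|k|}{3}(1-\vartheta^2)^{3/2})$ up to the $2\pi$ and the $(r^2-\bar r^2)^{-1}$ normalizations). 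Pulling the resulting $x$-independent estimate out of the integral and using $\int_{D_1(0)}|f|\,dx=\|f\|_{L^1(\R^2)}$ produces the stated inequality, the truncation tail being absorbed into the band-limited error term carried by the main theorem.

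The hard part will be the bookkeeping in the Lommel step. One must verify that the correction integral, which carries the order difference $l^2-k^2$ of size up to $k^2$, is actually controlled: the weight $\sigma^{-2}$ is of size $\sim k^{-2}$ precisely near the relevant end of the range $\sigma\sim\sigma_0=\vartheta|k|$, so the apparent power $k^2$ must be seen to cancel rather than survive the $(k\vartheta)^{-1}$ normalization, and this has to be checked uniformly in $|x|\le 1$. Equally, one must confirm that the boundary term $W(\sigma_0)$ and the $\int W/\sigma$ term do not reintroduce uncancelled powers of $k$ or of $r$; the separation condition $\bar r=2-\vartheta^2<r$ is exactly what keeps the oscillatory region of $J_l(r\,\cdot\,)$ strictly inside the exponential-decay region of $J_k(|x|\,\cdot\,)$, so that no part of the integral escapes the exponential control.
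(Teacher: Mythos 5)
Your route is genuinely different from the paper's and, as it stands, it does not close. The paper does not touch the Bessel-product integral at all here: it invokes Palamodov's localization estimate for the harmonic decomposition of the Radon transform (\cite{palamodov1995localization}), which gives directly
\[
\Big| \int_{\mathbb{R}} \int_{0}^{2\pi }\widehat{f}\left( \sigma \theta \left( \alpha  \right) \right) J_{l}\left( \sigma  \right) e^{-ik\alpha }\,d\alpha\, d\sigma \Big| \leq \frac{I(f)}{2\pi}\exp\Big(-\epsilon|k|+\frac{\delta}{r}|l|\Big),
\qquad I(f)=\int_{D_1(0)}\frac{|f|}{r^2-\bar r^2|x|^2}\,dx,
\]
for any $\epsilon$ with $\sqrt{\cosh 2\epsilon}\le r$, where $\delta=\int_0^\epsilon\sqrt{\cosh 2t}\,dt$. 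Choosing $\epsilon=1-\vartheta^2$ and using $|l|\le\vartheta|k|$ turns the exponent into $-\tfrac{|k|}{3}(1-\vartheta^2)^{3/2}$, and $I(f)\le (r^2-\bar r^2)^{-1}\|f\|_{L^1}$ finishes the proof. This is an analyticity-based bound on the \emph{whole} (untruncated) $\sigma$-integral, which is exactly why the lemma's right-hand side contains only $\|f\|_{L^1}$ and a pure exponential.

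Your proposal has two concrete gaps against the stated inequality. First, the truncation at $\sigma_0=\vartheta|k|$ leaves a tail of size $2\epsilon_{-1}(f,\vartheta|k|)$, which is controlled by the decay of $\widehat f$, not by $\|f\|_{L^1(\mathbb{R}^2)}$; no choice of $\sigma_0$ makes this term disappear, so your argument can only prove a weaker variant of the lemma with an extra $\epsilon_{-1}$ term (tolerable for the main theorem after re-summation, but not the statement as written). Second, the Lommel identity does produce the factor $(r^2-|x|^2)^{-1}\le(r^2-\bar r^2)^{-1}$ as you intend, but the correction term $(l^2-k^2)\int_0^{\sigma_0}\sigma^{-2}uv\,d\sigma$ contributes a factor of order $k^2$ (and $\int_0^{\sigma_0}W/\sigma$ a factor $\log k$) on top of the uniform Siegel bound $e^{-\frac{|k|}{3}(1-\vartheta^2)^{3/2}}$; these polynomial losses do not cancel, and you offer no mechanism for them to. The claimed bound is exactly $\frac{1}{2\pi(r^2-\bar r^2)}e^{-\frac{|k|}{3}(1-\vartheta^2)^{3/2}}\|f\|_{L^1}$ (since $\eta(k,\vartheta)/(k\vartheta)$ is that exponential), so surviving factors of $k^2\log k$ mean you have not proved the lemma, only a version with a degraded exponent. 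If you want an elementary Bessel-side proof you would need either the untruncated Weber--Schafheitlin evaluation of $\int_0^\infty J_l(r\sigma)J_k(|x|\sigma)\,d\sigma$ or the complex-analytic contour shift that Palamodov's result encapsulates.
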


% from \cite{palamodov1995localization}:
%	\begin{lemma} \label{E:Lemma}
%	Suppose $f \in C^{\infty}_{0}\left( D_1(0) \right)$ and $g\left( \varphi ,\psi  \right) = Vf\left( \varphi ,\psi  \right)$. Let $\widehat{g}_{k,m}$ be the Fourier coefficients of $g$. For any positive real number $\epsilon$ such that $\rho:=\sqrt{\cosh 2\epsilon}\leq r$, we have
%	\begin{align}
%	\left|\hat{g}_{k,m}\right| 
%	\leq& \dfrac{I(f)}{2\pi}\exp\left( -\epsilon|k|+\frac{\delta}{r}|k-m|\right)+\dfrac{I(f)}{2\pi}\exp\left( -\epsilon|k|+\frac{\delta}{r}|k+m|\right) \label{eq14},
%	\end{align} where $$\delta:=\int_{0}^{\epsilon}{\sqrt{\cosh2t} \,dt} \mbox{ and } I(f):=\int_{D_1(0)}{\dfrac{\left| f(x,y)\right|}{r^2-\bar r^2(x^2+y^2)}dxdy}.$$
%	\end{lemma}
\begin{proof}
 From \cite{palamodov1995localization}, for any positive real number $\epsilon$ such that $\rho:=\sqrt{\cosh 2\epsilon}\leq r$, we have
	\begin{align*}
	\Big| \int\limits_{\mathbb{R}} \int\limits_{0}^{2\pi }{\widehat{f}\left( \sigma \theta \left( \alpha  \right) \right) {{J}_{l}}\left( \sigma  \right) {{e}^{-ik\alpha }}d\alpha d\sigma } \Big|
	\leq& \dfrac{I(f)}{2\pi}\exp\left( -\epsilon|k|+\frac{\delta}{r}|l|\right)
	\end{align*} where $\delta:=\int_{0}^{\epsilon}{\sqrt{\cosh2t} \,dt}$ and
$$I(f):=\int_{D_1(0)}{\dfrac{\left| f(x,y)\right|}{r^2-\bar r^2(x^2+y^2)}dxdy}.$$
Simple calculation give for for  all $\epsilon<1$, $$\delta=\int_{0}^{\epsilon}{\sqrt{\cosh2t} \,dt} \leq \epsilon \sqrt{\cosh 2\epsilon} < \epsilon(1+\epsilon).$$  
Choosing $\epsilon = \left(1-\vartheta^2 \right)$ we obtain
	\begin{equation*}
	\epsilon -\dfrac{\delta \vartheta}{r} \geq \epsilon (1- \frac{1+ \epsilon}{r} \vartheta) \geq \epsilon(1-\vartheta) \geq  \epsilon(1-\sqrt{1-\epsilon}) \geq  \frac{1}{3}\epsilon^{3/2} = \left(1-\vartheta^2 \right)^{3/2}
	\end{equation*}
Therefore, for $|k|\geq \vartheta^{-1}|l|$,
	\begin{equation*}
	\exp\left( -\epsilon|k|+\frac{\delta}{r}|l|\right) \leq \exp\left(-\frac{|k|}{3}(1-\vartheta^2)^{3/2}\right).
	\end{equation*}		
Moreover, since $\rho = \cosh (2\epsilon) < 1+ \epsilon = \bar r$, 
$$I(f):=\int_{D_1(0)}{\dfrac{\left| f(x,y)\right|}{r^2-\bar r^2(x^2+y^2)}dxdy}\leq \frac{1}{r^2 - \bar r^2} \|f\|_{L_1(\R^2)}.$$
The above two inequalities finish our proof. 
\end{proof}		
%	\begin{theorem}
%		Let $f\in C_0^{\infty}D_1(0)$ be essentially b-band-limited $(b>1)$, and $g\left(\varphi, \psi\right)=Vf(\varphi, \psi)$. Let $\vartheta<1$ be close enough to $1$ and assume (\ref{eq14}) and (\ref{eq15}). We define the set 
%		\begin{align*}
%		K=\left\{(k,m):|k|<\dfrac{rb}{\vartheta};\max\left\{|k+m|,|k-m| \right\}<rb\right\}
%		\end{align*} 
%in $\mathbb{R}^2$. Let $\mW$ be a real non-singular $2\times 2$ matrix such that the set $K+2\pi\left(\mW^{-1}\right)l, l \in \mathbb{Z}^2$ is mutually disjoint.\\
%
%Then	
%	\begin{align*}
%		\left\| {S_{W,K}g - g} \right\|_{L^{\infty}} \le \dfrac{6}{\pi}\eta\left(\vartheta,rb \right)\left\|f\right\|_{L^1(\mathbb{R}^2)}+\dfrac{1}{\pi}\left(4b+\dfrac{2r^2}{\vartheta} \right)\xi_1(f,b),
%		\end{align*}
%where $\eta\left(\vartheta, rb \right)=\max\left\{ \dfrac{2b}{\vartheta}\eta_1\left(\vartheta,rb \right);\dfrac{1}{r}\eta_2\left(\vartheta,\dfrac{rb}{\vartheta} \right); \dfrac{\vartheta}{r^2-\bar r^2}\eta_3\left(\vartheta,\dfrac{rb}{\vartheta} \right)\right\}$.
%	\end{theorem}
	
We are now ready to prove Theorem~\ref{T:main}. 
\begin{proof}[\bf Proof of Theorem~\ref{T:main}]	 Due to Theorem~\ref{D:Sampling} for the solution, suffices to prove that
	$$2 \sum_{\mathbb{Z}^2 \backslash  K} \left|\widehat{g}_{k,m} \right| \leq \dfrac{12}{\pi}\eta^*\left(\vartheta,rb \right)\left\|f\right\|_{L^1(\mathbb{R}^2)}+\dfrac{4r^2}{\pi\vartheta^3}\left(2b+1\right)\epsilon_1(f,b).$$
	
Indeed, from Lemma~\ref{L:gmk}, we obtain
	$$|\widehat{g}_{k,m}|\leq \frac{1}{2\pi} \bigg|\int\limits_{\mathbb{R}} \int\limits_{0}^{2\pi } \widehat{f}( \sigma \theta\left( \alpha  \right)){{J}_{k-m}}\left( \sigma  \right) \, e^{-ik\alpha } \, d\alpha d\sigma \bigg| + \frac{1}{2\pi} \bigg |\int\limits_{\mathbb{R}} \int\limits_{0}^{2\pi} \widehat{f}(\sigma \theta \left( \alpha  \right)) J_{k+m}\left( \sigma  \right) \, e^{-ik\alpha} \, d\alpha d\sigma \bigg |.$$ 

Therefore,
\begin{align*} \sum_{\mathbb{Z}^2 \backslash  K} \left|\widehat{g}_{k,m} \right| \leq S_1 + S_2,\end{align*} 
where
\begin{align*} S_{1,2} =  \sum_{\mathbb{Z}^2 \backslash  K}  \frac{1}{2\pi} \bigg|\int\limits_{\mathbb{R}} \int\limits_{0}^{2\pi } \widehat{f}( \sigma \theta\left( \alpha  \right)){{J}_{k \mp m}}\left( \sigma  \right) \, e^{-ik\alpha } \, d\alpha d\sigma \bigg|.\end{align*}

%To this end, we make use of equation (\ref{eq6}) and estimate the following quantities
%\begin{eqnarray*}
%	S_1 &=&  \sum_{\mathbb{Z}^2 \setminus K} \dfrac{1}{4\pi} \int_{D_1(0)}{|f(x)|\left|\int^{\sigma_1}_{-\sigma_1}{J_{k-m}(r\sigma)J_k(\sigma|x|)d\sigma}\right|dx} +{2\pi}\epsilon_{-1}(f,\sigma_1) , \\ 
%	S_2 &=& \sum_{\mathbb{Z}^2 \setminus K} \dfrac{1}{4\pi} \int_{D_1(0)}{|f(x)|\left|\int^{\sigma_2}_{-\sigma_2}{J_{k+m}(r\sigma)J_k(\sigma|x|)d\sigma}\right|dx} + {2\pi}\epsilon_{-1}(f,\sigma_2). 
%\end{eqnarray*}

Let us denote
	\begin{align*}
	\bar \eta_1\left(\vartheta,\gamma\right)=\sum\limits_{m>k}{\eta\left(\vartheta,m\right)},\quad
	\bar \eta_2\left(\vartheta,\gamma\right)=\sum\limits_{m>k}{\bar \eta_1\left(\vartheta,m\right)}, \quad
	\bar \eta_3\left(\vartheta,\gamma\right)=\sum\limits_{m>k}{m \eta\left(\vartheta,m\right)},	
	\end{align*}
We note that $\eta\left(\vartheta,\gamma\right)$ exponentially decays as $k \to \infty$ and 
$$\sum\limits_{m>k}{m^d\eta\left(\vartheta,m\right)}\leq \int\limits_{k}^\infty {s^d\eta\left(\vartheta,s\right)} = \int_{k}^{\infty}{\vartheta s^{d+1} \exp\left(-\frac{s}{3}(1-\vartheta^2)^{3/2}\right)ds},$$
for $d \geq -1$ and $k$ big enough. Direct calculations then show
	\begin{align}
	\bar \eta_1\left(\vartheta,\gamma\right)&\leq \left(\frac{3}{(1-\vartheta^2)^{3/2}}+ \frac{9}{(1-\vartheta^2)^3} \frac{1}{\gamma}\right) \eta(\vartheta,\gamma) = \eta_1(\vartheta,\gamma),\label{eq4}\\
	\bar \eta_2\left(\vartheta,\gamma\right)& \leq \left(\frac{9}{(1-\vartheta^2)^{3}}+ \frac{54}{(1-\vartheta^2)^{9/2}} \frac{1}{\gamma}\right) \eta(\vartheta,\gamma) = \eta_2(\vartheta,\gamma),\label{eq5}\\
	\bar \eta_3\left(\vartheta,\gamma\right) & \leq  \gamma \bar \eta_1(\vartheta,\gamma) + \bar \eta_2(\vartheta,\gamma) = \eta_3(\vartheta,\gamma). \label{eq6}
	\end{align}

\begin{center}
	\begin{figure}[h]
 		\begin{center}
\includegraphics[width=0.3\textwidth]{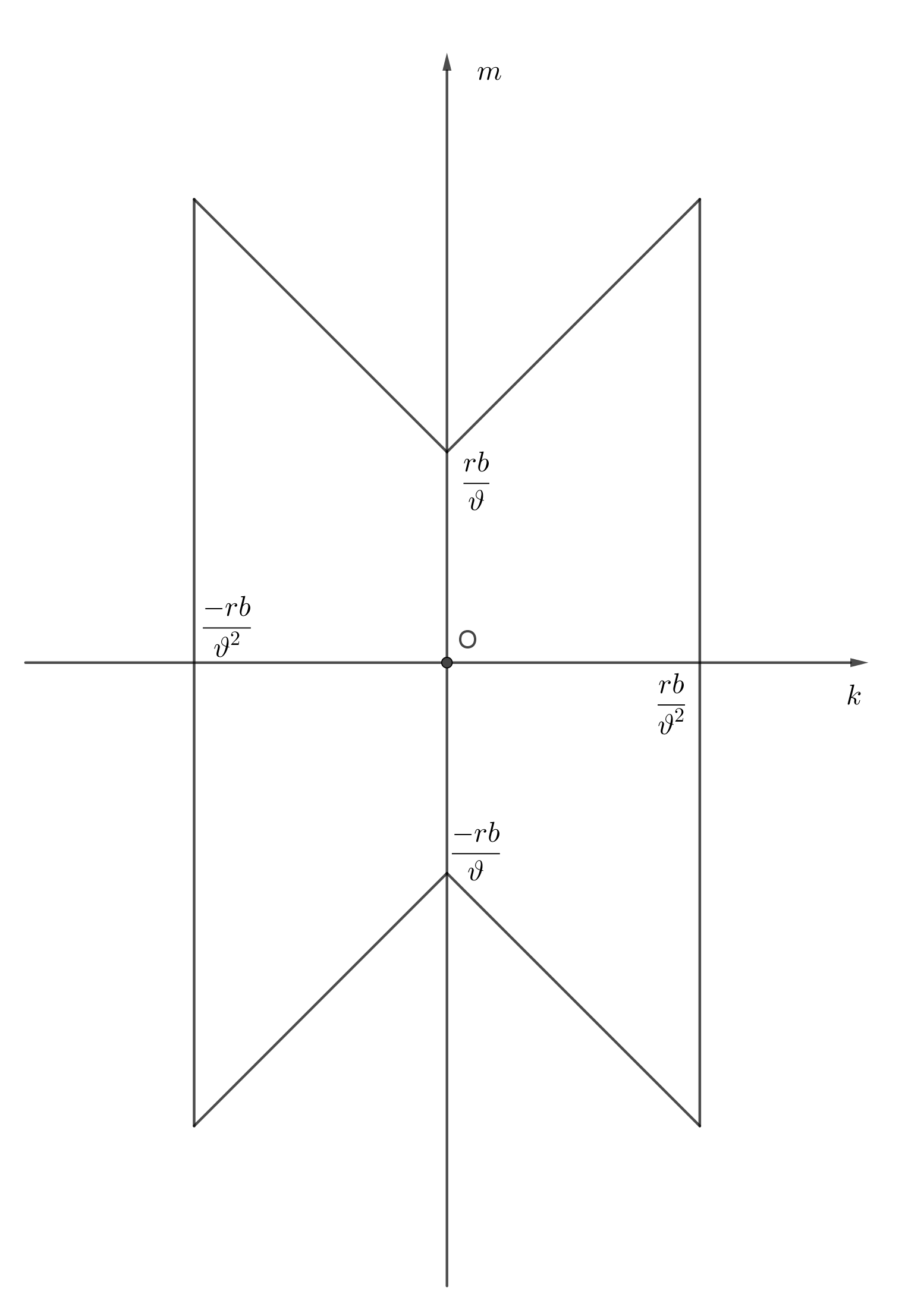}
		 \end{center}
 \caption{\label{F:Ours} The set K for $\vartheta = \frac{5}{6}, b=5, r=\frac{3}{2} $.}
	\end{figure}
\end{center}

\underline{{\bf PART 1}: Estimate $S_1$}

We decompose $S_1 = S_{11} + S_{12}+S_{13}$ where each $S_{ij}$ is the sum ranging over the region $\Sigma_{ij}$, where 
		\begin{eqnarray*}
		\Sigma_{11}:& =&\left\{(k,m) \in \mathbb{Z}^2: |k|<\dfrac{rb}{\vartheta^2}; |k-m| \geq\dfrac{rb}{\vartheta}\right\}, \\
		\Sigma_{12}:&= &\left\{(k,m) \in \mathbb{Z}^2: |k|\ge \dfrac{rb}{\vartheta^2}; |k| > \dfrac{|k-m|}{\vartheta} \right\},\\
		\Sigma_{13}:&=&\left\{(k,m) \in \mathbb{Z}^2: |k|\ge \dfrac{rb}{\vartheta^2}; |k| \leq \dfrac{|k-m|}{\vartheta}\right\}.\\
		\end{eqnarray*}
\begin{center}
	\begin{figure}[h]
		\begin{center}
\includegraphics[width=0.3\textwidth]{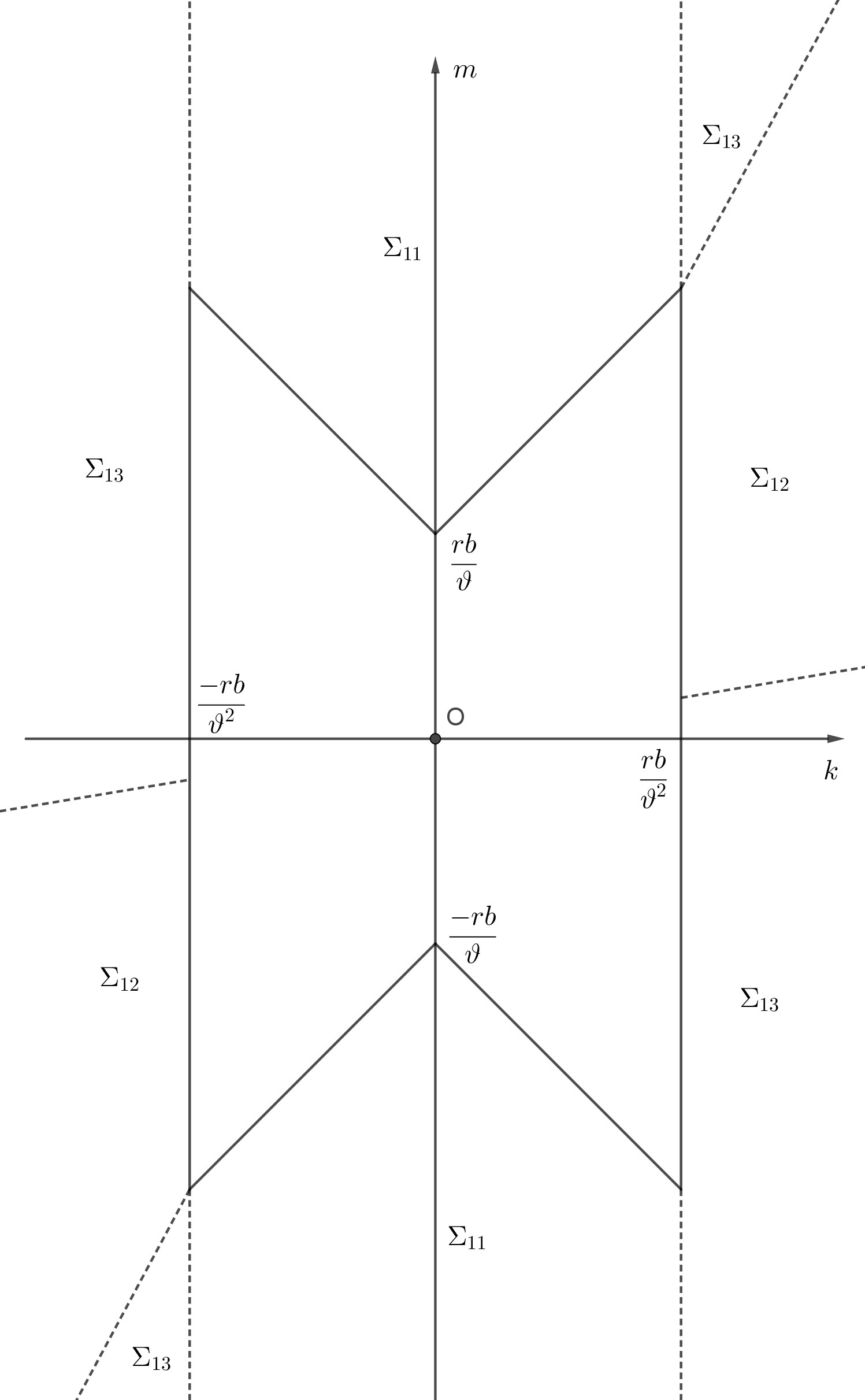}
		\end{center}
\caption{\label{F1:Ours} The parts of set $\mathbb{Z}^2 \backslash K$ when we calculate $I_1$.}
	\end{figure}
\end{center}
\begin{itemize}
\item  For the sum $S_{11}$, we note that $|k-m|\ge \vartheta |k|$ in $\Sigma_{11}$. Using Lemma~\ref{L:int} and Lemma~\ref{L:estint}, we obtain
		\begin{align*}
		\Sigma_{11} \le &\sum_{|k|\le \dfrac{rb}{\vartheta^2}} \; \sum_{|k-m|\ge \dfrac{rb}{\vartheta}}\left(\dfrac{1}{2r\pi}\eta\left(\vartheta,|k-m|\right)\left\|f\right\|_{L^1(\mathbb{R}^2)}+\dfrac{1}{2\pi}\epsilon_{-1}\left(f,\dfrac{\vartheta|k-m|}{r}\right)\right)\\
		\le &\sum_{|k|\le \dfrac{rb}{\vartheta^2}} \; \sum_{l\ge \dfrac{rb}{\vartheta}}\left(\dfrac{1}{r\pi}\eta\left(\vartheta,l\right)\left\|f\right\|_{L^1(\mathbb{R}^2)}+\dfrac{1}{\pi}\epsilon_{-1}\left(f,\dfrac{\vartheta l}{r}\right)\right)\\
		\stackrel{(\ref{eq4})}{\le} &\dfrac{2rb}{\vartheta^2}\left(\dfrac{1}{\pi r}\eta_1\left(\vartheta,\dfrac{rb}{\vartheta}\right)\left\|f\right\|_{L^1(\mathbb{R}^2)}+\dfrac{r}{\pi \vartheta}\epsilon_{0}(f,b)\right)\\
		\le &\dfrac{2b}{\pi\vartheta^2}\eta_1\left(\vartheta,\dfrac{rb}{\vartheta}\right)\left\|f\right\|_{L^1(\mathbb{R}^2)}+\dfrac{2r^2b}{\pi \vartheta^3}\epsilon_{0}(f,b),
		\end{align*}
where we have used, $0<\mu$ and $b>1$ (see, e.g., \cite{natterer2001mathematics})
\begin{equation}
\sum\limits_{l\geq b/\mu}{\epsilon_d\left(f,\mu l\right)\leq \dfrac{1}{\mu}\epsilon_{d+1}\left(f,b\right)}, \label{eq7}
\end{equation}
for $\mu = \vartheta/r$. 		
		
\item For the sum $S_{12}$, we notice that $|k-m| < \vartheta |k|$. Using Lemma~\ref{L:pala},
		\begin{align*}
		S_{12} \le &\sum_{|k|\ge \dfrac{rb}{\vartheta^2}}\dfrac{\vartheta|k|}{2\pi\left(r^2-\bar r^2 \right)}\eta\left(\vartheta,|k|\right)\left\|f\right\|_{L^1(\mathbb{R}^2)}
		\le 	\dfrac{\vartheta}{\pi\left(r^2-\bar r^2 \right)}\sum_{l\ge \dfrac{rb}{\vartheta^2}}l \, \eta\left(\vartheta, l\right)\left\|f\right\|_{L^1(\mathbb{R}^2)}\\
		\stackrel{(\ref{eq6})}{\le} &\dfrac{\vartheta}{\pi\left(r^2-\bar r^2 \right)}\eta_3\left(\vartheta,\dfrac{rb}{\vartheta^2}\right)\left\|f\right\|_{L^1(\mathbb{R}^2)}.
		\end{align*}
\item	In $\Sigma_{13}$, we get the sum for the $m$ first. Hence, from Lemma~\ref{L:estint}
		\begin{align*}
		\Sigma_{13} \le &\sum_{|k|\ge \dfrac{rb}{\vartheta^2}} \; \sum_{|k-m|\geq \vartheta|k|}\left(\dfrac{1}{2\pi r}\eta\left(\vartheta, |k-m|\right)\left\|f\right\|_{L^1(\mathbb{R}^2)}+\dfrac{1}{2\pi}\epsilon_{-1}\left(f,\dfrac{\vartheta|k-m|}{r}\right)\right)\\
		\stackrel{(\ref{eq4}),(\ref{eq7})}{\le} &\sum_{|k|\ge \dfrac{rb}{\vartheta^2}}\left(\dfrac{1}{2\pi r}\eta_1\left(\vartheta, \vartheta|k|\right)\left\|f\right\|_{L^1(\mathbb{R}^2)}+\dfrac{r}{2\pi\vartheta}\epsilon_0\left(f,\dfrac{\vartheta^2 |k|}{r}\right)\right)\\
		\le &\sum_{l\ge \dfrac{rb}{\vartheta^2}}\left(\dfrac{1}{\pi r}\eta_1\left(\vartheta, l\right)\left\|f\right\|_{L^1(\mathbb{R}^2)}+\dfrac{r}{\pi\vartheta}\epsilon_0\left(f,\dfrac{\vartheta^2 l}{r}\right)\right)\\
		\stackrel{(\ref{eq5}),(\ref{eq7})}{\le} &\dfrac{1}{\pi r}\eta_2\left(\vartheta,\dfrac{rb}{\vartheta^2}\right)\left\|f\right\|_{L^1(\mathbb{R}^2)}+\dfrac{r^2}{\pi\vartheta^3}\epsilon_{1}(f,b).
		\end{align*}
\end{itemize}

\underline{{\bf PART 2}: Estimate $S_2$}

Similarly, we consider the sums $S_{21}, S_{22}, S_{23}$ over the regions
		\begin{align*}
		\begin{matrix}
		&\Sigma_{21}:&|k|<\dfrac{rb}{\vartheta^2}&;&|k+m| \geq\dfrac{rb}{\vartheta}\\
		&\Sigma_{22}:&|k|\ge \dfrac{rb}{\vartheta^2}&;& |k| > \dfrac{|k+m|}{\vartheta} \\
		&\Sigma_{23}:&|k|\ge \dfrac{rb}{\vartheta^2}&;& |k| \leq \dfrac{|k+m|}{\vartheta}.\\
		\end{matrix}
		\end{align*}
\begin{center}
	\begin{figure}[h]
		\begin{center}
\includegraphics[width=0.3\textwidth]{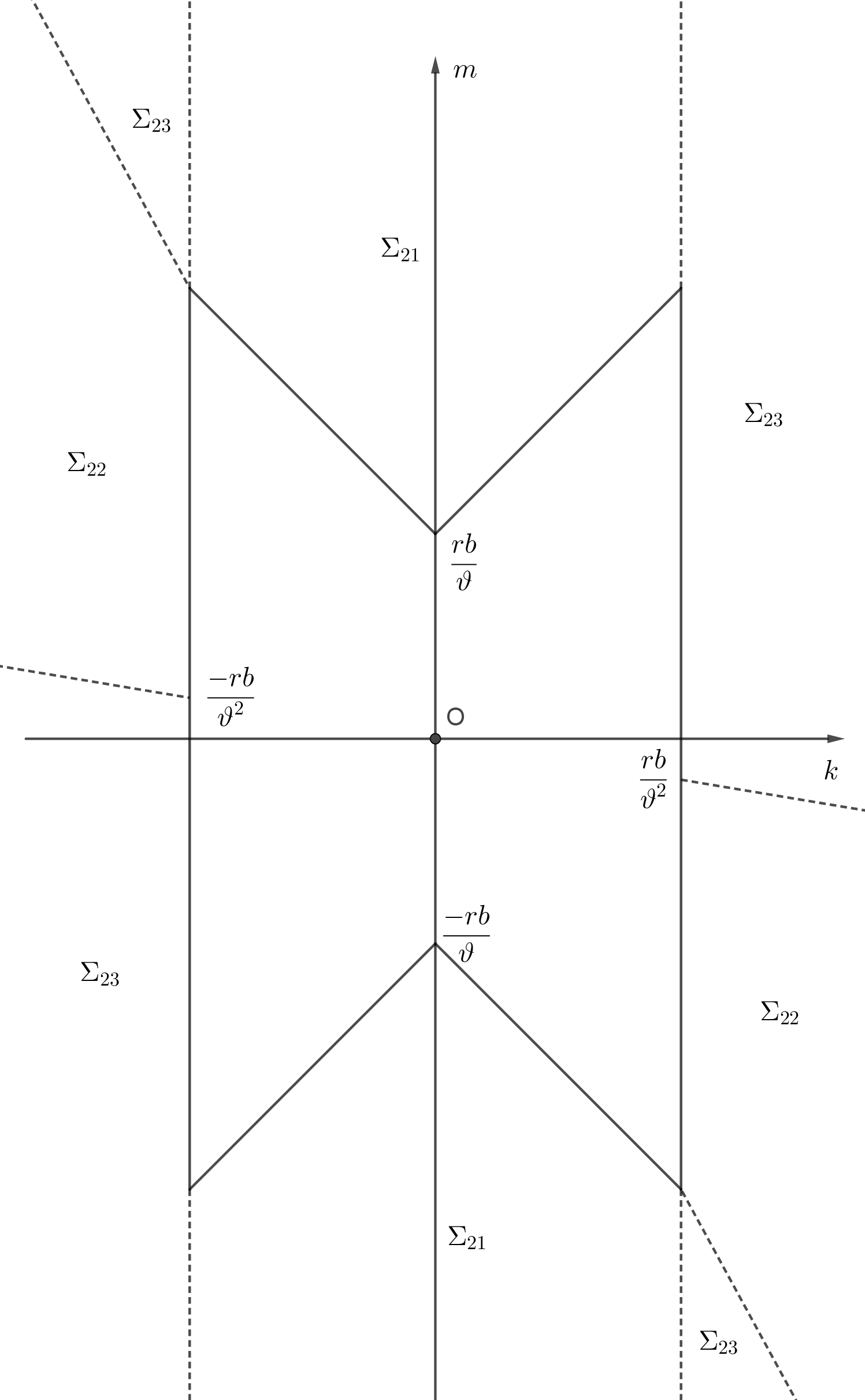}
		\end{center}
\caption{\label{F2:Ours} The parts of set $\mathbb{Z}^2 \backslash K$ when we calculate $I_2$.}
	\end{figure}
\end{center}
Then, 
		\begin{align*}
		S_{21} \le &\dfrac{2b}{\pi\vartheta^2}\eta_1\left(\vartheta,\dfrac{rb}{\vartheta}\right)\left\|f\right\|_{L^1(\mathbb{R}^2)}+\dfrac{2r^2 b}{\pi\vartheta^3}\epsilon_{0}(f,b),\\
		S_{22} \le &\dfrac{\vartheta}{\pi\left(r^2-\bar r^2 \right)}\eta_3\left(\vartheta,\dfrac{rb}{\vartheta^2}\right)\left\|f\right\|_{L^1(\mathbb{R}^2)},\\
		S_{23} \le &\dfrac{1}{\pi r}\eta_2\left(\vartheta,\dfrac{rb}{\vartheta^2}\right)\left\|f\right\|_{L^1(\mathbb{R}^2)}+\dfrac{r^2}{\pi\vartheta^3}\epsilon_{1}(f,b).
		\end{align*}
		
\noindent\underline{{\bf FINISHING THE PROOF}}

Combining the estimates in \textbf{PART 1} and \textbf{PART 2}, we obtain
	\begin{align*}
		2 \sum_{(k,m)\notin K}|\hat{g}(k,m)|\le \dfrac{12}{\pi}\eta^*\left(\vartheta,rb \right)\left\|f\right\|_{L^1(\mathbb{R}^2)}+\dfrac{4r^2}{\pi\vartheta^3}\left(2b+1\right)\epsilon_1(f,b),
		\end{align*}
	where $\eta^*\left(\vartheta,rb \right)=\max\left\{ \dfrac{2b}{\vartheta^2}\eta_1\left(\vartheta,\dfrac{rb}{\vartheta} \right);\dfrac{1}{r}\eta_2\left(\vartheta,\dfrac{rb}{\vartheta^2} \right); \dfrac{\vartheta}{r^2-\bar r^2}\eta_3\left(\vartheta,\dfrac{rb}{\vartheta^2} \right)\right\}$\\
According to Theorem \ref{D:Sampling}, we conclude
	\begin{align*}
		\left\| {S_{W,K}g - g} \right\|_{L^{\infty}} \le \dfrac{12}{\pi}\eta^*\left(\vartheta,rb \right)\left\|f\right\|_{L^1(\mathbb{R}^2)}+\dfrac{4r^2}{\pi\vartheta^3}\left(2b+1\right)\epsilon_1(f,b).
		\end{align*}
\end{proof}

\subsection{Sampling schemes for $g(\varphi, \psi)$}
In this section, we consider two schemes that satisfy the conditions in Theorem~\ref{T:main}. The first one is the standard scheme, where the sampling locations is the Cartesian product. The second one, more efficient, is an interlaced scheme. \\

\noindent{\bf Standard Sampling Scheme}
\begin{center}
	\begin{figure}[h]
		\begin{center}
\includegraphics[width=0.35\textwidth]{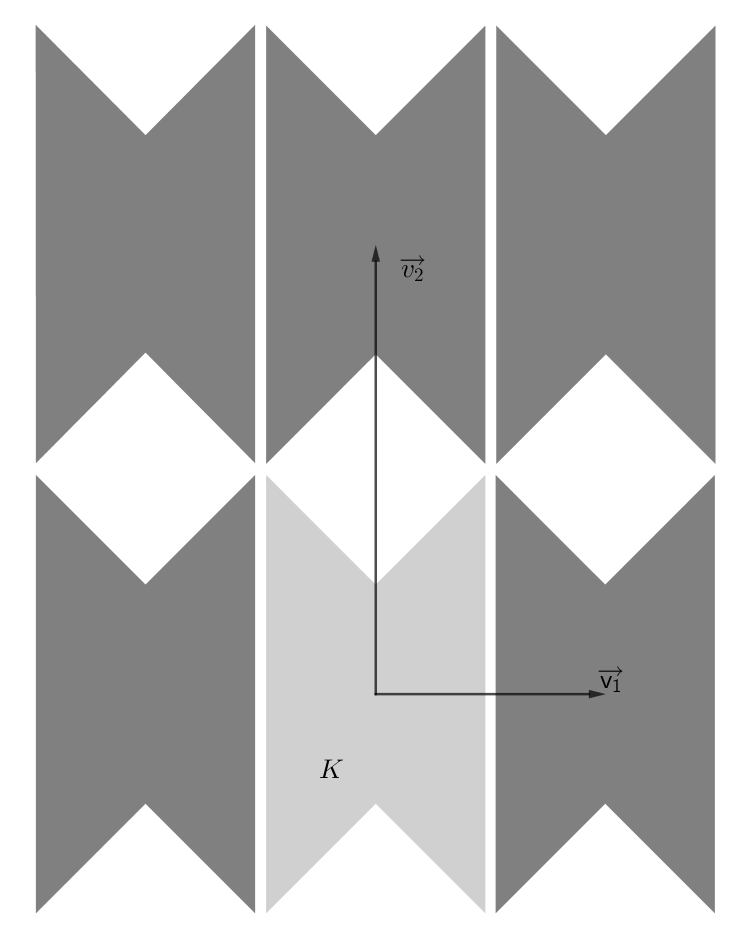}
		\end{center}
\caption{\label{F3:Ours} Geometry of the standard sampling scheme.}
	\end{figure}
\end{center}
	For the standard sampling, we chose $2\pi W^{-T}$ such that the translates $K+2\pi W^{-T}m$ are disjoint for any $m\in \mathbb{Z}^2$. From Figure \ref{F3:Ours} we have a choice
	\[
	 2\pi W^{-T}=\left(
\begin{array}{cc}
\dfrac{2rb}{\vartheta^2} & 0 \\
0 & \dfrac{2rb}{\vartheta}\left(1+\dfrac{1}{\vartheta}\right)
\end{array}
\right)
	\]
So that
		\[
	 W=\left(
\begin{array}{cc}
\pi \vartheta^2 / rb & 0 \\
0 & \dfrac{\pi\vartheta^2}{rb\left(1+\vartheta\right)}
\end{array}
\right)
	=:\left(
\begin{array}{cc}
2\pi / N_{\varphi} & 0 \\
0 & 2\pi / N_{\psi}
\end{array}
\right)
	\]
	We assume that $N_{\varphi}$ and $N_{\psi}$ are integers, otherwise we replace them by $[N_{\varphi}]$ and $[N_{\psi}]$.\\
	Assume that $f$ is supported in $|x|< r_0 \leq 1$, so $m$ can be restricted to $|m|< \dfrac{N_{\psi}\arcsin(r_0 / r)}{2\pi}$. Because the function $g\left(\varphi, \psi\right)$ is even in $\psi$ so we only chose $\psi \geq 0$. This yields the standard detector system
$g_{k,m} = g\left(\varphi_k, \psi_m \right)$, where 
	\begin{align*}
	\varphi_k = &\dfrac{k2\pi}{N_{\varphi}}, \hspace{1cm} \text{for} \hspace{1cm} 0\leq k \leq N_{\varphi} - 1\\
	\psi_m = &\dfrac{m2\pi}{N_{\psi}}, \hspace{1cm} \text{for} \hspace{1cm} 0 \leq m < \dfrac{N_{\psi}\arcsin(r_0 / r)}{2\pi}.
		\end{align*}
The sampling conditions in Theorem~\ref{T:main} are satisfied if
\begin{align*}
N_{\varphi}\geq &\dfrac{2\pi rb}{\vartheta^2}, \quad N_{\psi} \geq \dfrac{2rb\left(1+\vartheta\right)}{\vartheta^2}.\\
\end{align*}

\noindent{\bf Efficient Sampling Scheme}
\begin{center}
	\begin{figure}[h]
		\begin{center}
\includegraphics[width=0.35\textwidth]{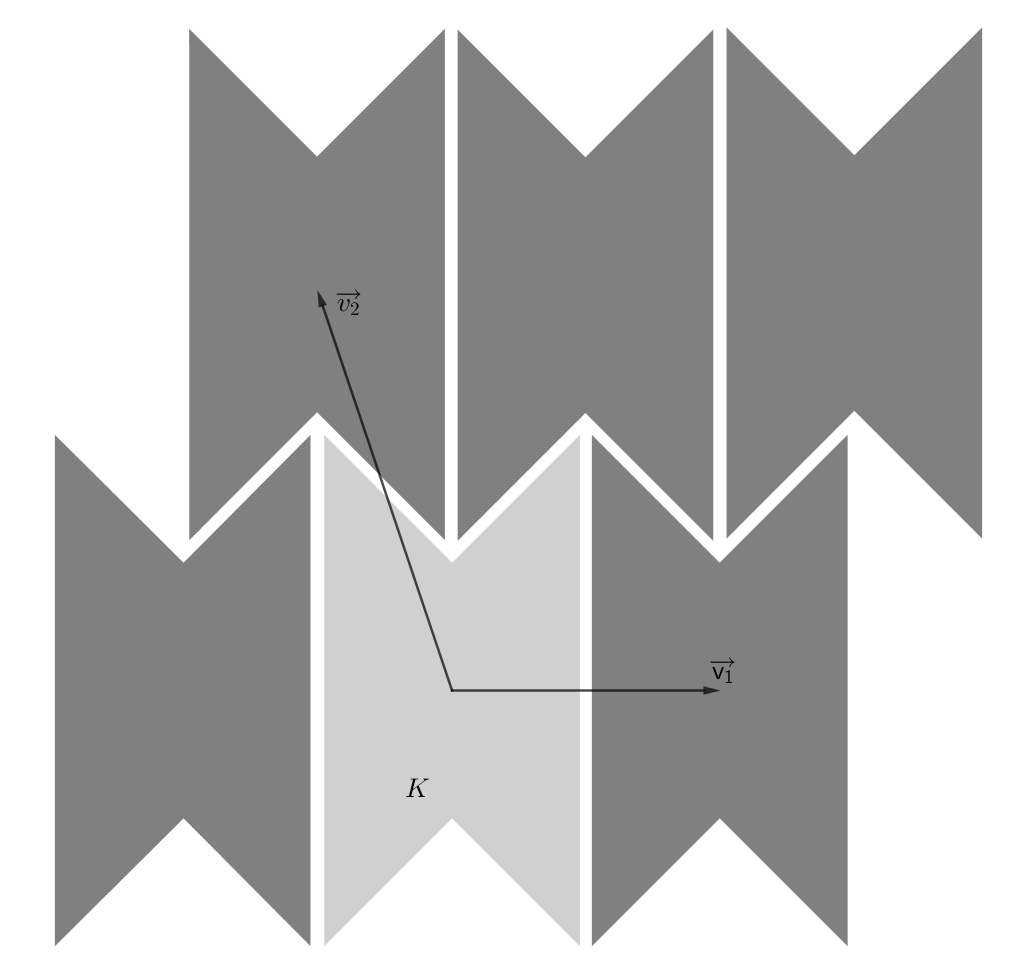}\\
		\end{center}
\caption{\label{F4:Ours} Geometry of efficient sampling scheme.}
	\end{figure}
\end{center}
Again, we need to chose $2\pi W^{-T}$ such that the translates $K+2\pi W^{-T}m$ are disjoint for any $m\in \mathbb{Z}^2$. Making the following choice (see Fig.~\ref{F4:Ours})
	\[
	 2\pi W^{-T}=\left(
\begin{array}{cc}
\dfrac{2rb}{\vartheta^2} & -\dfrac{rb}{\vartheta^2} \\
0 & \dfrac{rb}{\vartheta}\left(2+\dfrac{1}{\vartheta}\right)
\end{array}
\right),
	\]
We obtain
		\[
	 W=\left(
\begin{array}{cc}
\dfrac{\pi\vartheta^2}{rb} & 0 \\
\dfrac{\pi\vartheta^2}{rb\left(1+2\vartheta\right)} & \dfrac{2\pi\vartheta^2}{rb\left(1+2\vartheta\right)}
\end{array}
\right)
	=\left(
\begin{array}{cc}
2\pi / N_{\varphi} & 0 \\
\pi / N_{\psi} & 2\pi / N_{\psi}
\end{array}
\right)
	\]
	We obtain the interlaced sampling scheme $\left(\varphi_k, \psi_{k,m} \right)$ by
\begin{align*}
\varphi_k = \dfrac{k2\pi}{N_{\varphi}},\quad \psi_{k,m} = \dfrac{\pi(k+2m)}{N_{\psi}}.
\end{align*}
Let $l=k+2m$ then we get the efficient sampling scheme $\left(\varphi_k, \psi_{kl} \right)$ as
\begin{align*}
\varphi_k = &\dfrac{k2\pi}{N_{\varphi}} \hspace{1cm} \text{for} \hspace{1cm} 0\leq k \leq N_{\varphi}-1 \\
\alpha_{k,l} = &\dfrac{\pi l}{N_{\psi}} \hspace{1cm} \text{for k, l are parity and} \hspace{1cm} 0 \leq l < \dfrac{N_{\psi}\arcsin(r_0 / r)}{\pi}
\end{align*}
	which the sampling conditions are
\begin{align*}
N_{\varphi}\geq \dfrac{2\pi rb}{\vartheta^2}, \quad N_{\psi} \geq \dfrac{rb\left(1+2\vartheta\right)}{\vartheta^2}.
\end{align*}	
Taking the limit $\vartheta \rightarrow 1 $ we obtain 
\begin{align*}
N_{\varphi}\geq 2\pi rb, \quad N_{\psi} \geq  3rb.
\end{align*}	

\section*{Acknowlegement} Linh Nguyen's research is partially supported by the NSF grants, DMS 1212125 and DMS 1616904.


\begin{thebibliography}{10}

\bibitem{allmaras2010detecting}
Moritz Allmaras, David~P Darrow, Yulia Hristova, Guido Kanschat, and Peter
  Kuchment.
\newblock Detecting small low emission radiating sources.
\newblock {\em arXiv preprint arXiv:1012.3373}, 2010.

\bibitem{ambartsoumian2012inversion}
Gaik Ambartsoumian.
\newblock Inversion of the v-line radon transform in a disc and its
  applications in imaging.
\newblock {\em Computers \& Mathematics with Applications}, 64(3):260--265,
  2012.

\bibitem {cormack1963representation}
Allan Macleod Cormack.
\newblock Representation of a function by its line integrals, with some radiological applications.
\newblock {\em Journal of applied physics,American Institute of Physics}, 34(9):2722--2727, 1963.

\bibitem{basko1997analytical}
Roman Basko, Gengsheng~L Zeng, and Grant~T Gullberg.
\newblock Analytical reconstruction formula for one-dimensional compton camera.
\newblock {\em IEEE Transactions on Nuclear Science}, 44(3):1342--1346, 1997.

\bibitem{cree1994towards}
Michael~J Cree and Philip~J Bones.
\newblock Towards direct reconstruction from a gamma camera based on compton scattering.
\newblock {\em IEEE transactions on medical imaging}, 13(2):398--407, 1994.

\bibitem{everett1977gamma}
DB~Everett, JS~Fleming, RW~Todd, and JM~Nightingale.
\newblock Gamma-radiation imaging system based on the compton effect.
\newblock In {\em Proceedings of the Institution of Electrical Engineers},
  volume 124, pages 995--1000. IET, 1977.

\bibitem{natterer2001mathematics}
 Frank Natterer.
\newblock The mathematics of computerized tomography.
 \newblock {\em SIAM},2001.

\bibitem{natterer1993sampling}
	Natterer, F
\newblock Sampling in fan beam tomography.
 \newblock {\em SIAM Journal on Applied Mathematics}, 53(2):358--380,
 1993.
 
 \bibitem{faridani1994generalized}
Adel Faridani.
\newblock A generalized sampling theorem for locally compact abelian groups.
\newblock {\em Mathematics of computation}, 63:(207):307--327,1994.

\bibitem{faridani2006fan}
Adel Faridani.
\newblock Fan-beam tomography and sampling theory.
\newblock {\em Proceedings of Symposia in Applied Mathematics}, 63:43--66, 2006.

\bibitem {faridani1991reconstructing}
Adel Faridani.
\newblock Reconstructing from efficiently sampled data in parallel-beam computed tomography.
\newblock {\em Inverse problems and imaging, Pitman Research Notes in Mathematics Series}, 245:68--102, 1991.

\bibitem{helgason1999radon}
Sigurdur Helgason and S~Helgason.
\newblock {\em The radon transform}, volume~2.
\newblock Springer, 1999.

\bibitem{palamodov1995localization}
V P Palamodov 
\newblock Localization of harmonic decomposition of the Radon transform.
\newblock{\em Inverse problems}, 11(5):1025, 1995.


\bibitem{florescu2009single}
Lucia Florescu, John~C Schotland, and Vadim~A Markel.
\newblock Single-scattering optical tomography.
\newblock {\em Physical Review E}, 79(3):036607, 2009.

\bibitem{gouia2014exact}
Rim Gouia-Zarrad and Gaik Ambartsoumian.
\newblock Exact inversion of the conical radon transform with a fixed opening angle.
\newblock {\em Inverse Problems}, 30(4):045007, 2014.

\bibitem{terzioglu2015some}
Fatma Terzioglu.
\newblock Some inversion formulas for the cone transform.
\newblock {\em Inverse Problems}, 31(11):115010, 2015.


\bibitem{haltmeier2014exact}
Markus Haltmeier.
\newblock Exact reconstruction formulas for a radon transform over cones.
\newblock {\em Inverse Problems}, 30(3):035001, 2014.

\bibitem{haltmeier2016sampling} 
Haltmeier, Markus. 
\newblock Sampling conditions for the circular radon transform. 
\newblock {\em IEEE Transactions on Image Processing}, 25.6 (2016): 2910-2919.


\bibitem{hristova2015inversion}
Yulia Hristova.
\newblock Inversion of a v-line transform arising in emission tomography.
\newblock {\em Journal of Coupled Systems and Multiscale Dynamics},
  3(3):272--277, 2015.

\bibitem{jung2015inversion}
Chang-Yeol Jung and Sunghwan Moon.
\newblock Inversion formulas for cone transforms arising in application of compton cameras.
\newblock {\em Inverse Problems}, 31(1):015006, 2015.

\bibitem{kuchment2016three}
Peter Kuchment and Fatma Terzioglu.
\newblock Three-dimensional image reconstruction from compton camera data.
\newblock {\em SIAM Journal on Imaging Sciences}, 9(4):1708--1725, 2016.

\bibitem{moon2017analytic}
Sunghwan Moon and Markus Haltmeier.
\newblock Analytic inversion of a conical radon transform arising in
  application of compton cameras on the cylinder.
\newblock {\em SIAM Journal on imaging sciences}, 10(2):535--557, 2017.

\bibitem{morvidone2010v}
Marcela Morvidone, Ma{\"\i}~Khuong Nguyen, Tuong~T Truong, and Habib Zaidi.
\newblock On the v-line radon transform and its imaging applications.
\newblock {\em Journal of Biomedical Imaging}, 2010:11, 2010.

\bibitem{nguyen2005radon}
Mai~K Nguyen, Tuong~T Truong, and Pierre Grangeat.
\newblock Radon transforms on a class of cones with fixed axis direction.
\newblock {\em Journal of Physics A: Mathematical and General}, 38(37):8003,
  2005.

\bibitem{schiefeneder2017radon}
Daniela Schiefeneder and Markus Haltmeier.
\newblock The radon transform over cones with vertices on the sphere and orthogonal axes.
\newblock {\em SIAM Journal on Applied Mathematics}, 77(4):1335--1351, 2017.

\bibitem{siegel1953inequality}
	Keeve M. Siegel. 
\newblock An inequality involving Bessel functions of argument nearly equal to their order.
\newblock {\em Proceedings of the American Mathematical Society},
  4(6):858--859, 1953.

\bibitem{singh1983electronically}
Manbir Singh.
\newblock An electronically collimated gamma camera for single photon emission  computed tomography. part i: Theoretical considerations and design criteria.
\newblock {\em Medical Physics}, 10(4):421--427, 1983.

\bibitem{Stefanov} 
Stefanov, Plamen. 
\newblock Semiclassical sampling and discretization of certain linear inverse problems. 
\newblock {\em arXiv preprint arXiv:1811.01240}, 2018.


\bibitem{terras2012harmonic}
Terras, Audrey. 
\newblock Harmonic Analysis on Symmetric Spaces and Applications I. 
\newblock Springer Science \& Business Media, 2012.

\bibitem{truong2011new}
Tuong~T Truong and Mai~K Nguyen.
\newblock On new v-line radon transforms in $\mathbb{R}^2$ and their inversion.
\newblock {\em Journal of Physics A: Mathematical and Theoretical},
  44(7):075206, 2011.

\end{thebibliography}
\end{document}